\documentclass[11pt,oneside,reqno]{amsart}
\usepackage{amssymb}
\usepackage{amsmath}
\usepackage{graphicx}
\usepackage{mathrsfs}
\usepackage{amsfonts}
\usepackage{enumerate,amsmath,amssymb,amsthm}
\usepackage{hyperref}
\usepackage{bbm}

\hypersetup{backref,
colorlinks=true,
linkcolor=blue,
anchorcolor=blue,
citecolor=blue}

\pagestyle{plain}
\textwidth=160 mm
\textheight=240 mm
\oddsidemargin=0mm
\topmargin=-12mm
\numberwithin{equation}{section}

\newcommand{\be}{\begin{eqnarray}}
\newcommand{\ee}{\end{eqnarray}}
\newcommand{\ce}{\begin{eqnarray*}}
\newcommand{\de}{\end{eqnarray*}}
\newtheorem{theorem}{Theorem}[section]
\newtheorem{lemma}[theorem]{Lemma}
\newtheorem{claim}[theorem]{Claim}
\newtheorem{remark}[theorem]{Remark}
\newtheorem{definition}[theorem]{Definition}
\newtheorem{proposition}[theorem]{Proposition}
\newtheorem{example}[theorem]{Example}
\newtheorem{corollary}[theorem]{Corollary}
\newtheorem{assumption}{Assumption}[section]

\def \eqref#1{\hbox{(\ref{#1})}}

\def\[{{\Big[}}
\def\]{{\Big]}}
\def\<{{\langle}}
\def\>{{\rangle}}
\def\({{\Big(}}
\def\){{\Big)}}

\def\bx{{\mathbf{x}}}

\def\={&\!\!=\!\!&}
\def\bt{\begin{theorem}}
\def\et{\end{theorem}}
\def\bt{\begin{claim}}
\def\et{\end{claim}}
\def\bl{\begin{lemma}}
\def\el{\end{lemma}}
\def\br{\begin{remark}}
\def\er{\end{remark}}
\def\bas{\begin{assumption}}
\def\eas{\end{assumption}}
\def\bd{\begin{definition}}
\def\ed{\end{definition}}
\def\bp{\begin{proposition}}
\def\ep{\end{proposition}}
\def\bc{\begin{corollary}}
\def\ec{\end{corollary}}
\def\bx{\begin{example}}
\def\ex{\end{example}}

\def\geq{\geqslant}
\def\leq{\leqslant}

\allowdisplaybreaks
\usepackage{color}

\title[SVI Solutions]{\bf{SVI solutions to stochastic nonlinear diffusion equations on general measure spaces}}

\author{Benjamin Gess}
\address{Benjamin Gess, Faculty of Mathematics, University of Bielefeld, D-33615 Bielefeld, Germany;
Max Planck Institute for Mathematics in the Sciences, 04103 Leipzig, Germany.}
\thanks{Benjamin Gess is supported by the Deutsche Forschungsgemeinschaft (DFG, German Research Foundation) through CRC 1283, and by the Max Planck Society through the Research Group ``Stochastic Analysis in the Sciences".}
\email{bgess@math.uni-bielefeld.de}

\author{Michael R\"{o}ckner}
\address{Michael R\"{o}ckner, Faculty of Mathematics, University of Bielefeld, D-33615 Bielefeld, Germany;
Academy of Mathematics and Systems Science, Chinese Academy of Sciences, Beijing 100190, China.}
\thanks{Michael R\"{o}ckner is supported by the DFG through CRC 1283.}
\email{roeckner@math.uni-bielefeld.de}

\author{Weina Wu}
\address{Weina Wu, School of Economics, Nanjing University of Finance and Economics, Nanjing, Jiangsu 210023, China; Faculty of Mathematics, University of Bielefeld, D-33615 Bielefeld, Germany.}
\thanks{Weina Wu is supported by the National Natural Science Foundation of China (NSFC) (No.11901285), China Scholarship Council (CSC) (No.202008320239), DFG through CRC 1283}
\email{wuweinaforever@163.com}

\date{\today}
\begin{document}

\maketitle

\begin{abstract}
We establish a framework for the existence and uniqueness of solutions to stochastic nonlinear (possibly multi-valued) diffusion equations driven by multiplicative noise, with the drift operator $L$ being the generator of a transient Dirichlet form on a finite measure space $(E,\mathcal{B},\mu)$ and the initial value in $\mathcal{F}_e^*$, which is the dual space of an extended transient Dirichlet space. $L$ and $\mathcal{F}_e^*$ replace the Laplace operator $\Delta$ and $H^{-1}$, respectively, in the classical case. This framework includes stochastic fast diffusion equations, stochastic fractional fast diffusion equations, the Zhang model, and apply to cases with $E$ being a manifold, a fractal or a graph. In addition, our results apply to operators $-f(-L)$, where $f$ is a Bernstein function, e.g. $f(\lambda)=\lambda^\alpha$ or $f(\lambda)=(\lambda+1)^\alpha-1$, $0<\alpha<1$.\\
\textbf{Keywords.} stochastic nonlinear diffusion equation; stochastic generalized porous media equation; stochastic variational inequalities; functional inequalities; Dirichlet form; self-organized criticality

\end{abstract}

\tableofcontents

\section{Introduction}\label{Introduction}

The concept of variational inequalities (VIs) was first introduced by Stampacchia \cite{Stampacchia} and Lions-Stampacchia \cite{LionsStampacchia} in order to study regularity problems for partial differential equations. VIs were later used as a technique to solve optimal stopping-time problems and produced numerous new results. E.g., they allow to obtain the regularity of solutions under very weak regularity assumptions for the data, provide a very convenient
characterisation of the solution from the algorithmic point of
view and they turn out to be particularly useful to treat evolutionary
cases (cf. \cite[Chapter 3]{BensoussanLions} for further details). The notion of stochastic variational inequalites (SVIs) was developed by Rascanu, Haussmann-Pardoux and Rascanu-Bensoussan in a series of works (cf. \cite{Rascanu, HaussmannPardoux, BRSVI} and the references therein). As pointed out in \cite{BRSVI}, it arouse from the following problems. Considering the evolution of a state $X_t$ of a dynamical system perturbed by noise of the form:
\begin{equation*}\label{SPE1}
dX_t=f(t,X_t)dt+g(t,X_t)dW_t,
\end{equation*}
where $X_t$ takes value in $\Bbb{R}^d$ and
$W_t$ is a $d$-dimensional Wiener process. If $g$ is not degenerate, i.e., $gg^*\geq\alpha I$, $\alpha>0$, then $X_t$ can take any value in $\Bbb{R}^d$. In other words, $X_t$ can be located in any domain $\mathcal{O}\subset\Bbb{R}^d$ with strictly positive probability. However, for certain applications, one might prefer $X_t$ to remain within a convex domain $\Bar{\mathcal{O}}\subset\Bbb{R}^d$. For instance, one might want to constrain $X_t\in[a,b]$ when $d=1$. Consequently, the following model should be considered:
\begin{equation}\label{SPE2}
\left\{ \begin{aligned}
&dX_t+\partial I_{\Bar{\mathcal{O}}}(X_t)dt\ni f(t,X_t)dt+g(t,X_t)dW_t,\\
&X_0=x_0\in \Bar{\mathcal{O}},
\end{aligned} \right.
\end{equation}
where $\partial I_{\Bar{\mathcal{O}}}$ denotes the subdifferential of $I_{\Bar{\mathcal{O}}}$, which is defined by
\begin{eqnarray*}
I_{\Bar{\mathcal{O}}}(x):=\left\{
           \begin{array}{ll}
             &\!\!\!\!\!\!0,~~~~~~~~~~~~~~~\text{if}~x\in\Bar{\mathcal{O}},\\
             &\!\!\!\!\!\!+\infty,\ \text{if}~x\in\Bbb{R}^d\setminus \Bar{\mathcal{O}}.
           \end{array}
         \right.
\end{eqnarray*}
This gives rise to the motivation for studying the following more generalized model:
\begin{equation}\label{SPE3}
\left\{ \begin{aligned}
&dX_t+\partial \varphi(X_t)dt\ni f(t,X_t)dt+g(t,X_t)dW_t,\\
&X_0=x_0\in Dom(\varphi),
\end{aligned} \right.
\end{equation}
where $\partial\varphi$ denotes the subdifferential of a proper convex lower-semicontinuous function $\varphi$, and $\partial\varphi$ is a maximal monotone function (possibly multi-valued), hence leading to the notation $``\ni"$.

To treat equations like \eqref{SPE3}, alternative concepts of strong solutions, weak solutions (or SVI solutions) and almost weak solutions to equation \eqref{SPE3} have been proposed (see e.g. \cite{Rascanu}, \cite{BRSVI}). The readers are refer to Definitions \ref{svi} and \ref{strong solution} below for the key differences between SVI solutions and strong solutions. In recent years, SVI solutions have also been developed by Barbu-Da Prato-R\"ockner, Gess-R\"ockner, Gess-T\"olle (see e.g. \cite{BDRSIAM}, \cite{GRSIAM}, \cite{GTJDE}, \cite{N} and the references therein) as an effective approach when studying some types of multi-valued stochastic partial differential equations (SPDEs) which can be written as gradient flow equations. These provide a strategy to gain the information about the structure of the limit process of the approximation equations. In addition, the limit process is allowed to have its initial value in a larger space and the assumptions on the coefficients of the SPDE can be weakened.

Before we introduce the general class of equations we are concerned with in the present paper, let us start with the following special case, i.e., stochastic generalized porous media equations (PMEs):
\begin{equation} \label{SPME1}
\left\{ \begin{aligned}
&dX_t\in \Delta\beta(X_t)dt+B(t,X_t)dW_t,\ \text{in}\ [0,T]\times \mathcal{O},\\
&X_0=x_0\in H^{-1}.
\end{aligned} \right.
\end{equation}
Here $\Delta$ is the Dirichlet Laplacian with zero boundary conditions on a bounded domain $\mathcal{O}$ of $\Bbb{R}^d$, $d\geq3$. $\beta (:=\partial\psi): \mathbb{R}\rightarrow\!2^\mathbb{R}$ is the subdifferential of $\psi:\Bbb{R}\rightarrow [0,\infty)$, where $\psi$ is a convex, lower-semicontinuous function (cf. \textbf{(H1)}-\textbf{(H3)} in Section \ref{Assumptions and main results} for the exact conditions on $\beta$ and $\psi$, see also Examples \ref{example FDE} and \ref{example Zhang} below). It is well known that $\beta$ is a maximal monotone multi-valued function (cf. Remark \ref{H1 and H3}). Furthermore, $H^{-1}$ is the dual space of $H_0^1(\mathcal{O})$, which is the usual
Sobolev space of order two in $L^2(\mathcal{O}, dx)$ ($dx$ denotes the Lebesgue measure). $B$ is a Hilbert-Schmidt operator-valued map fulfilling certain Lipschitz and growth conditions.

Equation \eqref{SPME1} can be reformulated as a gradient flow equation:
\begin{equation*} \label{SPME2}
\left\{ \begin{aligned}
&dX_t\in -\partial\varphi(X_t)dt+B(t,X_t)dW_t,\ \text{in}\ [0,T]\times \mathcal{O},\\
&X_0=x_0\in H^{-1},
\end{aligned} \right.
\end{equation*}
where $\varphi$ is a proper potential function (see \cite{GRSIAM, N}). Equation \eqref{SPE2} is a typical case where $\partial\varphi$ is in fact multi-valued. But there is also a strong motivation for studying the multi-valued case originating from physics. The exploration of \eqref{SPME1} expands upon the study of the classical PME perturbed by multiplicative noise, as follows:
\begin{equation} \label{PME}
\left\{ \begin{aligned}
&dX_t=\Delta(|X_t|^\gamma)dt+B(t,X_t)dW_t,\ \text{in}\ [0,T]\times \mathcal{O},\\
&X_0=x_0\in H^{-1}\text{~on~} \mathcal{O},
\end{aligned} \right.
\end{equation}
where $\gamma>1$. There are numerous applications of \eqref{PME} in the deterministic case, i.e., when $B:=0$, in physics. These include describing the flow of an isentropic gas through porous media, heat radiation in plasmas, the study of groundwater infiltration and so on. We refer to \cite{V, PMEBOOK} for more details on PMEs, and \cite{BDR1} for more details on stochastic PMEs. By extending $\Delta (|\cdot|^\gamma)$ to a maximal monotone operator $\Delta\beta(\cdot)$ as in \eqref{SPME1}, the framework becomes applicable to stochastic fast diffusion equations (FDEs), which is a crucial model for singular nonlinear (density-dependent) diffusive phenomena and has applications across various fields, see e.g. Example \ref{example FDE}. Analytically, the rationale for considering $\beta$ as a multi-valued function is the following. Applying the chain rule, we obtain:
\begin{eqnarray}\label{eq:1.20}
\Delta\beta(X_t)=\beta'(X_t)\Delta X_t+\beta''(X_t)|\nabla X_t|^2.
\end{eqnarray}
This shows that $\beta'(X_t)$ is the diffusion coefficient of the equation, which is dependent on the solution. This explains why $\beta$ is usually called the ``diffusivity (function)" and why it must be assumed to be increasing. If $\beta$ is strictly increasing, which corresponds to $\beta'>0$ on $\Bbb{R}$ in \eqref{eq:1.20}, then we would have local strict ellipticity, hence we would be in the non-degenerate case. However, we do not make this assumption in this paper, so the degenerate case is covered. \eqref{eq:1.20} reveals why it is important to include multi-valued diffusivities, as it allows us to cover non-continuous $\beta$ (see Example \ref{example Zhang}). This means that its generalized derivative $\beta'$ (in the sense of Schwartz distributions) would be a weighted Dirac measure $\delta_{r_0}$ at a point of discontinuity $r_0$ of $\beta$. Therefore, if we consider the time evolution $t\mapsto X_t(\xi)$ of the substance density at a point $\xi\in \mathcal{O}$ and it encounters a discontinuity point $r_0\in\Bbb{R}$ of $\beta$, the diffusion coefficient $\beta'(X_t(\xi))$ would jump to $+\infty$, indicating a ``very large" diffusion of the system at that moment. This is an interesting case of high relevance, especially in physics, and it is also the reason why the solution to \eqref{SPME1} are sometimes called singular diffusions. An important example is the Zhang model that descrides self-organized criticality (SOC), which has applications in various dynamical systems, e.g., earthquake mechanisms, forest fires, and sandpiles, etc., see Example \ref{example Zhang}.

In fact, equation \eqref{SPME1} is also of high interest if we replace the Laplace operator $\Delta$ by other operators. For example, nonlocal pseudodifferential operators, such as the fractional Laplacian $-(-\Delta)^{\frac{\alpha}{2}}$, $0<\alpha\leq2$. These are infinitesimal generators of stable L\'{e}vy processes (see e.g. Corollary 3.4.11 and Example 3.4.13 in \cite{David}), and they relate to models in physics, such as the study of hydrodynamic limits of interacting particle systems with long-range dynamics (see \cite[Appendix B]{BV} and the references therein). More generally, one considers nonlocal operators of type $-f(-\Delta)$, where $f$ is a Bernstein function (see \cite{SSV}), or even more broadly, by a self-adjoint operator $L$ satisfying certain properties. It turns out that a suitable general enough framework to cover all these operators is to assume that they are generators of transient Dirichlet forms on $L^2(\mathcal{O},dx)$. Additionally, many interesting operators on $\mathcal{O}$ are not self-adjoint on $L^2(\mathcal{O},dx)$, but on $L^2(\mathcal{O},\mu)$ for some other measure $\mu$ replacing $dx$. An example is the Friedrichs extension of the operator
$L_0=\Delta +2\frac{\nabla\rho}{\rho}\cdot\nabla$ on $L^2(\mathcal{O}, \rho^2dx)$ (\cite{RWX}), where $\rho\in H^1_0(\mathcal{O})$. Moreover, one is also interested in more general ``state spaces" than $\mathcal{O}$, for example, a smooth or even non-smooth Riemannian manifold, such as a fractal, or allow infinite-dimensional state spaces, such as the Wiener space.

To meet all these demands in generality, we need to extend the study of \eqref{SPME1} to a sufficiently wide framework as follows:
\begin{equation} \label{eq:1}
\left\{ \begin{aligned}
&dX_t\in L\beta(X_t)dt+B(t,X_t)dW_t,\ \text{in}\ [0,T]\times E,\\
&X_0=x_0\in\mathcal{F}_e^*\text{~on~} E.
\end{aligned} \right.
\end{equation}
Here $(E,\mathcal{B},\mu)$ is a standard measurable space (\cite[page 133, Definition 2.2]{P67}) with a finite measure $\mu$, $(L,D(L))$ is the generator of a symmetric strongly continuous contraction sub-Markovian semigroup $\{P_t\}_{t>0}$ on $L^2(\mu)$, which additionally is assumed to be the generator of a transient Dirichlet form $(\mathcal{E}, D(\mathcal{E}))$ (cf. Section \ref{Dirichlet spaces} and note that $D(\mathcal{E})$ replaces $H_0^1(\mathcal{O})$ from above, more precisely the homogeneous Sobolev spaces $\dot{H}^1$). Throughout the paper, for $1\leq p\leq\infty$, we write $L^p(\mu)$ for $L^p(E,\mathcal{B},\mu)$ and $|\cdot|_p$ for the norm in $L^p(\mu)$. $W$ is a cylindrical Wiener process on some separable Hilbert space $U$ defined on a probability space $(\Omega,\mathcal{F},\Bbb{P})$ with normal filtration $(\mathcal{F}_t)_{t\geq0}$. $B$ is a Hilbert-Schmidt operator-valued map fulfilling certain Lipschitz and growth conditions (cf. \textbf{(H4)} in Section \ref{Assumptions and main results}). $\mathcal{F}_e^*$ is the dual space of the transient extended Dirichlet space $\mathcal{F}_e$ of $(\mathcal{E}, D(\mathcal{E}))$ (cf. Section \ref{Dirichlet spaces} and note that $\mathcal{F}_e^*$ replaces $H^{-1}$ from above, more precisely $(\dot{H}^{1})^*$, the dual space of $\dot{H}^1$). $\beta (:=\partial\psi): \mathbb{R}\rightarrow\!2^\mathbb{R}$ is defined as in \eqref{SPME1}, and satisfies \textbf{(H1)}-\textbf{(H3)} in Section \ref{Assumptions and main results}.

A series of papers have contributed to the study of \eqref{SPME1} on a domain of $\Bbb{R}^d$, see e.g., \cite{BDPTRF, BDRAOP, BDRJMAA, BDRSIAM, BDRSIAM2, BDRCMP, BDRCMP2012, BRARCH, BRR, GCMP, GRSIAM, GRTRAN, GTJDE, N}, and \eqref{eq:1} on general measure spaces but with $\beta$ being a single-valued function, see e.g., \cite{RRW, RW, RWX} and the references therein. However, as far as we know, there are only two papers that address \eqref{eq:1} on general measure spaces with $\beta$ being a multi-valued function. The existence and uniqueness of strong solutions (see Definition \ref{strong solution}) to \eqref{eq:1} for more regular initial values $x_0\in L^2(\mu)\cap L^{2m}(\mu)\cap\mathcal{F}_e^*$, where $m\in[1,\infty)$, was established in \cite{RWX2021}, however, requiring more restrictive assumptions on the Dirichlet forms $(\mathcal{E}, D(\mathcal{E}))$ (see Remark \ref{RWX strong solution}). Furthermore, in \cite[Example 7.3]{GTJMPA} the well-posedness of ``generalized solutions" (see \cite[page 211]{book Daprato}) to \eqref{eq:1} were proved under the assumption $D(\mathcal{E})=\mathcal{F}_e$. In \cite{GTJMPA}, such solutions were called ``limit solutions" (see \cite[ Definition 4.5]{GTJMPA}.) As an improvement, in our paper, under more general assumptions we prove that these ``generalized solutions" are exactly the solutions in the sense of SVI to \eqref{eq:1} (see Definition \ref{svi}). This shows an important characteristic of SVI solutions, namely they give sense to ``generalized solutions" as a true solution to stochastic (partial) differential equations in the SVI sense.

We will employ the approach introduced in \cite{GRSIAM} (which was also used in \cite{N}). Given that we are working with a general measure space, the techniques available for Euclidean spaces in \cite{GRSIAM} cannot be directly applied to our framework. Consequently, it becomes necessary to consider the space $L^1(\mu)\cap\mathcal{F}_e^*$  (cf. Section \ref{Lower semicontinuity of varphi}), prove the lower semicontinuity of the energy potential functional (cf. Proposition \ref{cvx}), and construct appropriate convergent sequences in
$\mathcal{F}_e^*$ that satisfy certain properties (cf. Proposition \ref{construction of un}). The main difficulty is to prove the uniqueness of SVI solutions to \eqref{eq:1}, in which Proposition \ref{construction of un} plays a key role. Since it is of independent interest, and its proof is independent of the rest of the paper, we include Proposition \ref{construction of un} in Appendix \ref{Appendix}. For the proof of Proposition \ref{construction of un}, we need to use the semigroup $\{P_t\}_{t>0}$ as a mollifier, which is, therefore, assumed to be ultrabounded (cf. Remark \ref{ultraboundeded}). This is necessary to replace the use of  convolution as a mollifier in the special case $E:=\mathcal{O}$ in \cite{N}, which is also used in \cite{GRSIAM}. Finally, in Appendix \ref{Appendix} we compare the notion of SVI solutions to \eqref{eq:1} to the notion of strong solutions to \eqref{eq:1} in the sense of \cite{RWX2021} (see Definition \ref{strong solution} below).


The remaining sections of the paper are organized as follows: In Section \ref{Notations and perliminaries}, we introduce notations and provide a brief summary of some known results used in this paper. In Section \ref{Assumptions and main results}, we present our assumptions, the definition of SVI solutions to \eqref{eq:1} and state our main theorem (Theorem \ref{theorem}). Section \ref{applications} is devoted to applications of our main result. To enhance the clarity of the proofs' structure, some lemma and propositions are put in Appendix \ref{Appendix}.

\section{Preliminaries and notations}\label{Notations and perliminaries}

\vspace{2mm}
\subsection{Dirichlet spaces}\label{Dirichlet spaces}

Consider the $\Gamma$-transform $V_r (r>0)$ of a symmetric strongly continuous contraction sub-Markovian semigroup $\{P_t\}_{t>0}$ on $L^2(\mu)$:
\begin{eqnarray*}
V_rw=\frac{1}{\Gamma(\frac{r}{2})}\int_0^\infty
t^{\frac{r}{2}-1}e^{-t}P_twdt,~~w\in L^2(\mu).
\end{eqnarray*}
Define the Bessel-potential space $(F_{1,2},\|\cdot\|_{F_{1,2}})$ (\cite{Fukushima}) by
$$
F_{1,2}:=V_1(L^2(\mu)),~\text{with~norm}~\|u\|_{F_{1,2}}=|w|_2,~~\text{for}~~u=V_1w,~~ w\in L^2(\mu),
$$
consequently,
$$
V_1=(1-L)^{-\frac{1}{2}},~~F_{1,2}=D\big((1-L)^{\frac{1}{2}}\big)~~\text{and}~~\|u\|_{F_{1,2}}=|(1-L)^{\frac{1}{2}}u|_2,
$$
where $(L,D(L))$ is the generator of $\{P_t\}_{t>0}$ on $L^2(\mu)$. The dual space of $F_{1,2}$ is denoted by $F^*_{1,2}$ and $F^*_{1,2}=D((1-L)^{-\frac{1}{2}})$.

\vspace{2mm}
The Dirichlet form $(\mathcal{E}, D(\mathcal{E}))$ of $\{P_t\}_{t>0}$ on $L^2(\mu)$ associated with $(L, D(L))$ is by definition given by
\begin{eqnarray*}
   && D(\mathcal{E})=D(\sqrt{-L}), \\
   && \mathcal{E}(u,v)=\mu(\sqrt{-L}u \sqrt{-L}v),~~ \forall u,v\in D(\mathcal{E}),
\end{eqnarray*}
and accordingly we have the identification
\begin{eqnarray*}\label{F12Fe}
F_{1,2}=D(\mathcal{E}),~~~\|u\|_{F_{1,2}}^2=\mathcal{E}(u,u)+\langle u,u\rangle_2,
\end{eqnarray*}
where $\langle\cdot, \cdot\rangle_2$ denotes the inner product in $L^2(\mu)$.

Recall from \cite[page 40]{FOT} that a Dirichlet space $(\mathcal{E}, F_{1,2})$ relative to $L^2(\mu)$ is transient if there exists a bounded $\mu$-integrable function $g$ strictly positive $\mu$-a.s. on $E$ such that
\begin{eqnarray*}
\int_E|u|gd\mu\leq\sqrt{\mathcal{E}(u,u)},~~~\forall u\in F_{1,2}.
\end{eqnarray*}
If $(\mathcal{E}, F_{1,2})$ is transient, then let $\mathcal{F}_e$ be the completion of $F_{1,2}$ with respect to the norm
$$\|\cdot\|_{\mathcal{F}_e}:=\mathcal{E}(\cdot,\cdot)^{\frac{1}{2}},$$
in this case, from \cite[page 43, Theorem 1.5.3]{FOT}, we know that $F_{1,2}=\mathcal{F}_e\cap L^2(\mu)$, and $F_{1,2}$ is dense both in $L^2(\mu)$ and in $\mathcal{F}_e$. Let $\mathcal{F}^*_e$ be the dual space of $\mathcal{F}_e$ with inner product
$\langle\cdot,\cdot\rangle_{\mathcal{F}^*_e}$ and corresponding norm
$\|\cdot\|_{\mathcal{F}^*_e}$, which is induced by the Riesz map
$\mathcal{F}_e\ni u\mapsto \mathcal{E}(\cdot,u)\in \mathcal{F}^*_e$. $\mathcal{F}^*_e$ is also a Hilbert space. Since $F_{1,2}\subset\mathcal{F}_e$ continuously and densely, we have $\mathcal{F}^*_e\subset F^*_{1,2}$ also continuously and densely. For more background knowledge on Dirichlet forms, we refer to \cite{FOT, MR}.

\subsection{Some known results}\label{Some known results}

From now on, we need to assume:

\vspace{2mm}

\textbf{(T)}~~ The symmetric Dirichlet form $(\mathcal{E},D(\mathcal{E}))$ associated with $(L,D(L))$ is transient.

\vspace{2mm}
Consider the inner product
$\mathcal{E}_\nu:=\mathcal{E}+\nu \langle\cdot, \cdot \rangle_2,~ \nu\in(0,\infty)$, on
$F_{1,2}$, i.e.,
\begin{eqnarray*}
  \|v\|^2_{F_{1,2,\nu}}:=\mathcal{E}(v,v)+\nu\int|v|^2d\mu=\|v\|^2_{\mathcal{F}_e}+\nu\int|v|^2d\mu,\ \forall v\in F_{1,2},
\end{eqnarray*}
and
\begin{eqnarray*}\label{F12}
 \|l\|_{F^*_{1,2,\nu}}:=_{F^*_{1,2}}\langle l, (\nu-L)^{-1}l\rangle^{\frac{1}{2}}_{F_{1,2}}:=\sup_{\substack{v\in
F_{1,2}\\ \|v\|_{F_{1,2,\nu}}\leq1}}l(v),\ \forall l\in F_{1,2}^*,
\end{eqnarray*}
\begin{eqnarray*}
 \|l\|_{\mathcal{F}^*_e}:=\sup_{\substack{v\in
\mathcal{F}_e\\ \|v\|_{\mathcal{F}_e}\leq1}}l(v),\ \forall l\in\mathcal{F}^*_e.
\end{eqnarray*}
Recall from \cite[Propostion 2.1]{RWX2021} that
\begin{eqnarray}\label{lim1}
\lim_{\nu\rightarrow0}\|l\|_{F^*_{1,2,\nu}}=\sup_{\nu>0}\|l\|_{F^*_{1,2,\nu}}=\|l\|_{\mathcal{F}_e^*},\ \forall l\in\mathcal{F}_e^*.
\end{eqnarray}

\vspace{2mm}
Now, define $\mathcal{F}_e^*\cap L^p(\mu)$, $1<p<\infty$, in the following sense:
\begin{eqnarray}\label{Fe*Lp}
\mathcal{F}_e^*\cap L^p(\mu):=\{v\in L^p(\mu)|\exists C\in(0,\infty)~\text{such~that}~\mu(uv)\leq C\|u\|_{\mathcal{F}_e},~\forall u\in\mathcal{F}_e\cap L^{\frac{p}{p-1}}(\mu)\},
\end{eqnarray}
equipped with the norm $\|v\|_{L^p(\mu)}+\|v\|_{\mathcal{F}_e^*}$. Since $(\mathcal{E},D(\mathcal{E}))$ is a transient Dirichlet space, from \cite[page 131, Proposition 3.1]{RRW}, we know that $\mathcal{F}_e\cap L^{\frac{p}{p-1}}(\mu)$ is a dense subset of both $\mathcal{F}_e$ and $L^{\frac{p}{p-1}}(\mu)$, which in particular implies that $\mathcal{F}_e^*\cap L^p(\mu)$ is really a subset of $\mathcal{F}_e^*$ and we have
\begin{eqnarray*}\label{v2}
_{\mathcal{F}_e^*}\langle v,u\rangle_{\mathcal{F}_e}=\mu(uv),~\forall u\in \mathcal{F}_e\cap L^{\frac{p}{p-1}}(\mu),~v\in \mathcal{F}_e^*\cap L^p(\mu).
\end{eqnarray*}

The following lemma was proved in \cite{RRW}, the part (ii) here is a special case of \cite[Lemma 3.3 (ii)]{RRW}, (iii) is a special case of \cite[Lemma 3.3 (iii)]{RRW}. We shall heavily use this lemma, especially \eqref{rrw3}, in the proof of Theorem \ref{theorem}.
\begin{lemma}\label{RRW}
(i)~The map $\bar{L}:\mathcal{F}_e\rightarrow\mathcal{F}_e^*$ defined by
\begin{eqnarray}\label{rrw1}
\bar{L}v:=-\mathcal{E}(v,\cdot),~~v\in\mathcal{F}_e
\end{eqnarray}
(i.e. the Riesz isomorphism of $\mathcal{F}_e$ and $\mathcal{F}_e^*$ multiplied by (-1)) is the unique continuous linear extension of the map
\begin{eqnarray*}
D(L)\ni v\mapsto\mu(Lv\cdot)\in\mathcal{F}_e^*.
\end{eqnarray*}
(ii)~Let $u\in \mathcal{F}_e\cap L^{\frac{p}{p-1}}(\mu)$, $v\in \mathcal{F}_e^*\cap L^p(\mu)$. Then
\begin{eqnarray}\label{rrw3}
\langle \bar{L}u,v\rangle_{\mathcal{F}_e^*}=-\mu(uv).
\end{eqnarray}
\end{lemma}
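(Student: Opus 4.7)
The plan is to treat the two parts separately, both essentially unfolding definitions, with the key inputs being (a) density of $D(L)$ in $\mathcal{F}_e$ and (b) the explicit form of the $\mathcal{F}_e^*$-$\mathcal{F}_e$ pairing on $\mathcal{F}_e^*\cap L^p(\mu)$ recorded just before the lemma.

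For part (i), I would first observe that $\bar{L}v=-\mathcal{E}(v,\cdot)$ is just $(-1)$ times the Riesz isomorphism $J:\mathcal{F}_e\to\mathcal{F}_e^*$, hence automatically continuous and linear. To check agreement with $v\mapsto\mu(Lv\,\cdot)$ on $D(L)$, I would take $v\in D(L)\subset F_{1,2}=D(\sqrt{-L})$ and $u\in F_{1,2}$ and use self-adjointness of $\sqrt{-L}$ together with $(\sqrt{-L})^2=-L$ on $D(L)$ to compute
\[
\mathcal{E}(v,u)=\mu\bigl(\sqrt{-L}v\cdot\sqrt{-L}u\bigr)=\mu(-Lv\cdot u),
\]
so that $\bar{L}v(u)=-\mathcal{E}(v,u)=\mu(Lv\cdot u)$ for all $u\in F_{1,2}$, and hence for all $u\in\mathcal{F}_e$ by the density of $F_{1,2}$ in $\mathcal{F}_e$ (recalled in Section~\ref{Dirichlet spaces}) and continuity. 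Uniqueness of the continuous linear extension then reduces to showing $D(L)$ is dense in $\mathcal{F}_e$: since $D(L)$ is dense in $F_{1,2}$ with respect to $\|\cdot\|_{F_{1,2}}$ (standard for generators of strongly continuous semigroups), and $\|\cdot\|_{\mathcal{F}_e}\leq\|\cdot\|_{F_{1,2}}$, $D(L)$ is $\|\cdot\|_{\mathcal{F}_e}$-dense in $F_{1,2}$, which is in turn $\|\cdot\|_{\mathcal{F}_e}$-dense in $\mathcal{F}_e$ by \cite[Theorem~1.5.3]{FOT}.

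For part (ii), I would use the fact that the inner product on $\mathcal{F}_e^*$ is the one that makes the Riesz map $J:\mathcal{F}_e\to\mathcal{F}_e^*$, $J(w)=\mathcal{E}(\cdot,w)$, an isometric isomorphism, i.e.\ $\langle Jw_1,Jw_2\rangle_{\mathcal{F}_e^*}=\mathcal{E}(w_1,w_2)$. Given $v\in\mathcal{F}_e^*\cap L^p(\mu)$, write $v=J\tilde v$ for a unique $\tilde v\in\mathcal{F}_e$. Since $\bar L=-J$, I would compute
\[
\langle \bar{L}u,v\rangle_{\mathcal{F}_e^*}
=\langle -Ju,J\tilde v\rangle_{\mathcal{F}_e^*}
=-\mathcal{E}(u,\tilde v)
=-(J\tilde v)(u)
={}_{\mathcal{F}_e^*}\langle v,u\rangle_{\mathcal{F}_e}\cdot(-1).
\]
Under the hypothesis $u\in\mathcal{F}_e\cap L^{p/(p-1)}(\mu)$, $v\in\mathcal{F}_e^*\cap L^p(\mu)$, the identification displayed immediately after \eqref{Fe*Lp} gives $_{\mathcal{F}_e^*}\langle v,u\rangle_{\mathcal{F}_e}=\mu(uv)$, yielding \eqref{rrw3}.

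There is no real obstacle here beyond bookkeeping; the only subtlety is to keep straight (i) the pairing $_{\mathcal{F}_e^*}\langle\cdot,\cdot\rangle_{\mathcal{F}_e}$, which is the functional-on-vector evaluation, versus (ii) the Hilbert space inner product $\langle\cdot,\cdot\rangle_{\mathcal{F}_e^*}$, which is transported from $\mathcal{F}_e$ via the Riesz isometry. Conflating the two is the one place where a careless argument could slip, so I would write out the identity $\langle Jw_1,Jw_2\rangle_{\mathcal{F}_e^*}=\mathcal{E}(w_1,w_2)$ explicitly before invoking it.
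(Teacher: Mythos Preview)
Your proof is correct. The paper does not give its own proof of this lemma at all: it simply records that the result was proved in \cite{RRW} (with (ii) and (iii) here being special cases of \cite[Lemma~3.3]{RRW}), so there is no in-paper argument to compare against. Your argument is precisely the natural one---unwinding the Riesz identification for (i), and combining $\bar L=-J$ with the pairing identity ${}_{\mathcal{F}_e^*}\langle v,u\rangle_{\mathcal{F}_e}=\mu(uv)$ recorded just after \eqref{Fe*Lp} for (ii)---and this is exactly the route taken in \cite[pp.~133--134]{RRW}. Your closing remark about distinguishing the dual pairing from the transported Hilbert inner product is on point and is indeed the only place where care is needed.
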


If there is no danger of confusion, below we write $L$ instead of $\bar{L}$. For a Hilbert space $\mathbb{H}$, throughout the paper, let $L^2([0,T]\times\Omega;\mathbb{H})$ denote the space of all $\mathbb{H}$-valued square-integrable functions on
$[0,T]\times\Omega$, $C([0,T];\mathbb{H})$ the space of all
continuous $\mathbb{H}$-valued functions on $[0,T]$, and $L^\infty([0,T];\Bbb{H})$ the space of all $\Bbb{H}$-valued essentially bounded measurable functions on $[0,T]$. For two
Hilbert spaces $H_1$ and $H_2$, the space of Hilbert-Schmidt
operators from $H_1$ to $H_2$ is denoted by $L_2(H_1, H_2)$. For
simplicity, the positive constants $C$, $C_1$, $C_2$ and $C_3$ used in this paper may change from line to line.

\section{Assumptions and main results}\label{Assumptions and main results}

Recall that in \eqref{eq:1}, we set $\beta:=\partial \psi$. Let $K:=L^1(\mu)\cap L^\infty(\mu)\cap \mathcal{F}^*_e$. Besides assumption \textbf{(T)}, we need the following assumptions:

\vspace{2mm}

\textbf{(H1)}~~$\psi:\Bbb{R}\rightarrow [0,\infty)$ is a convex, lower-semicontinuous function with $\psi(0)=0$.

\textbf{(H2)}
\begin{eqnarray}\label{psi}
\lim_{|r|\rightarrow\infty}\frac{\psi(r)}{|r|}=+\infty.
\end{eqnarray}

\textbf{(H3)}~~There exists $0<c<\infty$ such that
\begin{eqnarray}\label{beta}
\inf\{|\eta|:\eta\in \beta(r)\}\leq c(|r|+1),~~ \forall r\in\Bbb{R}.
\end{eqnarray}

\textbf{(H4)} $B: [0, T]\times K\times
\Omega\rightarrow L_2(U, F^*_{1,2})$ is progressively
measurable, i.e. for any $t\in[0,T]$, this mapping restricted to
$[0,t]\times K\times \Omega$ is measurable w.r.t.
$\mathcal{B}([0,t])\times\mathcal{B}(K)\times \mathcal{F}_t$,
where $\mathcal{B}(\cdot)$ is the Borel $\sigma$-field for a
topological space. $B(t,u)$ satisfies:

\vspace{2mm}

\noindent \textbf{(i)}~~There exists $C_1\in[0, \infty)$ such that for all $\nu\in(0,1]$,
$$\|B(\cdot, u)-B(\cdot, v)\|^2_{L_2(U, F^*_{1,2,\nu})}\leq C_1\|u-v\|^2_{F^*_{1,2,\nu}},~~ \forall u, v\in K\ \text{on}\ [0, T]\times \Omega,$$
where for simplicity, we write $B(t,u)$
meaning the mapping $\omega\mapsto B(t,u,\omega)$.

\noindent \textbf{(ii)}~~There exists $C_2\in(0, \infty)$ such that for all $\nu\in(0,1]$,
$$\|B(\cdot, u)\|^2_{L_2(U, F^*_{1,2,\nu})}\leq C_2(\|u\|^2_{F^*_{1,2,\nu}}+1),~~ \forall u\in K\ \text{on}\ [0, T]\times \Omega.$$

\noindent \textbf{(iii)}~~There exists $C_3\in(0, \infty)$ such that
$$\|B(\cdot, u)\|^2_{L_2(U, L^2(\mu))}\leq C_3(|u|^2_2+1),~~ \forall u\in K\ \text{on}\ [0, T]\times \Omega.$$

\begin{remark}\label{remark}
\textbf{(i)} \eqref{lim1} and \textbf{(H4)(i)}, \textbf{(ii)} imply that,
\begin{eqnarray}\label{fe1}
\|B(\cdot, u)-B(\cdot, v)\|^2_{L_2(U, \mathcal{F}^*_e)}\leq C_1\|u-v\|^2_{\mathcal{F}^*_e},~~ \forall u,v\in K~ \text{on}~ [0, T]\times \Omega.
\end{eqnarray}
\begin{eqnarray}\label{fe2}
\|B(\cdot, u)\|^2_{L_2(U, \mathcal{F}^*_e)}\leq C_2(\|u\|^2_{\mathcal{F}^*_e}+1),~~ \forall u\in K\ \text{on}\ [0, T]\times \Omega.
\end{eqnarray}
\end{remark}

\begin{remark}\label{H1 and H3}
From \cite[page:47, Theorem 2.8]{Barbu}, we know that if $\widetilde{\psi}:\Bbb{R}\rightarrow(-\infty,+\infty]$ is a lower semicontinuous proper convex function, then its subdifferntial (\cite[page 7, (1.15)]{Barbu}) denoted by $\widetilde{\beta}:\mathbb{R}\rightarrow 2^\mathbb{R}$, i.e.,
$$\widetilde{\beta}(x):=\partial\widetilde{\psi}(x)=\{y\in\Bbb{R};\widetilde{\psi}(x)\leq \widetilde{\psi}(u)+\langle y,x-u\rangle,~\forall u\in\Bbb{R}\},$$
is a maximal monotone graph. Therefore, \textbf{(H1)} implies this strictly weak condition\\
\textbf{(H1)'}~~$\beta: \mathbb{R}\rightarrow 2^\mathbb{R}$ is a maximal monotone graph such that $0\in\beta(0)$.
\end{remark}

\begin{example}\label{example FDE}
Typical examples for $\psi$ which satisfies \textbf{(H1)} and \textbf{(H2)}, and for $\beta$ satisfying \textbf{(H3)} are
\begin{eqnarray*}
\psi(r):=\frac{1}{\theta+1}|r|^{\theta+1},\ \forall r\in\Bbb{R},
\end{eqnarray*}
where $\theta\in(0,1)$, and
\begin{eqnarray*}
\beta(r)=\partial\psi(r)=|r|^{\theta-1}r,\ \forall r\in \Bbb{R}.
\end{eqnarray*}
In this case, \eqref{SPME1} is the stochastic FDE perturbed by multiplicative noise. Applications for FDEs include for instance: plasma diffusion with the Okuda-Dawson scaling leads to the FDE with $\theta:=\frac{1}{2}$ (\cite{BH78}); if $\theta:=\frac{d-2}{d+2}$, $d\geq3$, it is related to the famous Yamabe flow of Riemannian geometry (see \cite[Section 7.5]{V}); in the case that $0<\theta<1$, King studied it in a model of diffusion of impurities in silicon (\cite{Ki88}). One property of the solutions to FDEs is that they decay to zero in finite time with positive probability. We refer to \cite{V} for more details on FDEs, \cite{GRSIAM, RRW} and the references therein for more details on stochastic FDEs.
\end{example}
\begin{example}\label{example Zhang}
Other examples for $\psi$ which satisfies \textbf{(H1)} and \textbf{(H2)}, and for $\beta$ satisfying \textbf{(H3)} are
\begin{eqnarray*}
\psi(r):=\left\{
           \begin{array}{ll}
             &\!\!\!\!\!\!\frac{1}{2}r^2+r,~~~~~~~~~~~~\text{if}~r>0,\\
             &\!\!\!\!\!\!0,\ \text{if}~r\leq0,
           \end{array}
         \right.
\end{eqnarray*}
and
\begin{eqnarray*}\label{zhang beta}
\beta(r)=\partial\psi(r)=\left\{
           \begin{array}{ll}            &\!\!\!\!\!\!r+1,~~~~~~~~~~~~\text{if}~r>0,\label{zhang}\\
             &\!\!\!\!\!\![0,1],~~~\text{if}~r=0,\\
             &\!\!\!\!\!\!0,~~~~~~~~~~~~\text{if}~r<0.
           \end{array}
         \right.
\end{eqnarray*}
In this case, \eqref{SPME1} is related to the Zhang model (\cite{Zhang}), which is a modified version of the Bak-Tang-Wiesenfeld (BTW) model introduced in \cite{BTW88}. Both models are cellular automaton models that describe SOC. SOC models have applications in various dynamical systems, e.g., earthquake mechanisms, forest fires, and sandpiles, etc. These systems have a critical state in which small perturbations can lead to large-scale events, and SOC models attempt to explain this behavior. We refer to \cite{BGS,GCMP,BDRCMP2012,BDRCMP} and the references therein for more studies of SOC model.
\end{example}

To describe the SVI solutions, we introduce the energy functional $\varphi:\mathcal{F}_e^*\rightarrow [0,\infty]$ by
\begin{eqnarray}\label{convex}
\varphi(v):=\left\{
           \begin{array}{ll}
             &\!\!\!\!\!\!\!\! \int_E \psi(v(x))\mu(dx),~~~~~ v\in L^1(\mu)\cap\mathcal{F}_e^*,\\
             &\!\!\!\!\!\!\!\!+\infty,~~~~~~~~~~~~~~~~~ ~~~~\text{else}.
           \end{array}
         \right.
\end{eqnarray}

\begin{definition}\label{svi}
Let $x_0\in L^2(\Omega,\mathcal{F}_0;\mathcal{F}_e^*)$. An $\mathcal{F}_t$-adapted process $X\in L^2(\Omega;C([0,T];\mathcal{F}^*_e))$ is said to be an SVI solution to \eqref{eq:1} if the followings hold:

(i) (Regularity)
\begin{eqnarray}\label{svii}
\varphi(X)\in L^1([0,T]\times\Omega).
   \end{eqnarray}

(ii) (SVI (stochastic variational inequality)) For each $\mathcal{F}_t$-progressively measurable process $G\in L^2([0,T]\times \Omega;\mathcal{F}^*_e)$, and each $\mathcal{F}_t$-adapted process $Z\in L^2(\Omega;C([0,T];\mathcal{F}_e^*))\cap L^2([0,T]\times \Omega; L^2(\mu))$ solving the equation
\begin{eqnarray}\label{svi2}
Z_t:=Z_0+\int_0^tG_sds+\int_0^tB(s,Z_s)dW_s,~~\forall t\in[0,T],
\end{eqnarray}
we have that for some $C>0$,
\begin{eqnarray}\label{svi3}
&&\!\!\!\!\!\!\!\!\Bbb{E}\|X_t-Z_t\|^2_{\mathcal{F}^*_e}+2\Bbb{E}\int_0^t\varphi(X_s)ds\nonumber\\
\leq&&\!\!\!\!\!\!\!\!\Bbb{E}\|x_0-Z_0\|_{\mathcal{F}^*_e}+2\Bbb{E}\int_0^t\varphi(Z_s)ds-2\Bbb{E}\int_0^t\langle G_s,X_s-Z_s\rangle_{\mathcal{F}^*_e}ds\nonumber\\
&&\!\!\!\!\!\!\!\!+C\Bbb{E}\int_0^t\|X_s-Z_s\|^2_{\mathcal{F}^*_e}ds, ~~\forall t\in[0,T].
\end{eqnarray}
\end{definition}

\vspace{2mm}
The following theorem is the main result of this paper, i.e., the well-posedness of \eqref{eq:1} in the sense of Definition \ref{svi}.
\begin{theorem}\label{theorem}
Let $x_0\in L^2(\Omega,\mathcal{F}_0;\mathcal{F}_e^*)$. Suppose \textbf{(H1)}-\textbf{(H4)} and \textbf{(T)} are satisfied, then there exists an SVI solution $X$ to \eqref{eq:1} in the sense of Definition \ref{svi}. Furthermore, if
\begin{eqnarray}\label{uniqueness assumption}
\|P_t\|_{1\rightarrow2}<\infty,\ \forall t>0,
\end{eqnarray}
then for two SVI solutions $X$, $Y$ with initial conditions $x_0, y_0\in L^2(\Omega,\mathcal{F}_0;\mathcal{F}_e^*)$, we have
\begin{eqnarray}\label{th1}
\sup_{t\in [0,T]}\Bbb{E}\|X_t-Y_t\|^2_{\mathcal{F}_e^*} \leq C\Bbb{E}\|x_0-y_0\|^2_{\mathcal{F}_e^*}.
\end{eqnarray}
\end{theorem}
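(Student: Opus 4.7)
The plan for existence is approximation by Yosida regularization. Replace $\beta=\partial\psi$ with its Yosida approximation $\beta_\lambda$, which is single-valued, monotone and Lipschitz, and $\psi$ with its Moreau envelope $\psi_\lambda$, and consider
\begin{equation*}
dX^\lambda_t = L\beta_\lambda(X^\lambda_t)\,dt + B(t,X^\lambda_t)\,dW_t, \qquad X^\lambda_0 = x_0.
\end{equation*}
This is a coercive monotone SPDE in the Gelfand triple $F_{1,2}\subset L^2(\mu)\subset F^*_{1,2}$ (equivalently, the triple built from $\mathcal{F}_e$), whose well-posedness follows from the Krylov--Rozovskii variational framework together with \textbf{(H4)} and Lemma \ref{RRW}. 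Applying It\^o's formula to $\|X^\lambda\|^2_{\mathcal{F}_e^*}$ and using \eqref{rrw3} gives $\langle L\beta_\lambda(X^\lambda_s), X^\lambda_s\rangle_{\mathcal{F}_e^*} = -\mu(\beta_\lambda(X^\lambda_s) X^\lambda_s) \leq -\mu(\psi_\lambda(X^\lambda_s))$ by the subdifferential inequality. Combined with \textbf{(H4)}, \eqref{fe2} and Gronwall, this yields the uniform bound
\begin{equation*}
\mathbb{E}\sup_{t\leq T} \|X^\lambda_t\|^2_{\mathcal{F}_e^*} + \mathbb{E}\int_0^T \mu(\psi_\lambda(X^\lambda_s))\,ds \leq C\bigl(1+\mathbb{E}\|x_0\|^2_{\mathcal{F}_e^*}\bigr).
\end{equation*}
By \textbf{(H2)} and de la Vall\'ee--Poussin the family $\{X^\lambda\}$ is uniformly integrable in $L^1(\mu)$, so a subsequential weak limit $X$ exists, and Proposition \ref{cvx} (lower semicontinuity of $\varphi$) gives $\varphi(X)\in L^1([0,T]\times\Omega)$.

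To verify the SVI \eqref{svi3} for the limit, fix an admissible test pair $(Z,G)$. Apply It\^o's formula to $\|X^\lambda_t - Z_t\|^2_{\mathcal{F}_e^*}$; the drift cross term equals $-2\int_0^t \mu(\beta_\lambda(X^\lambda_s)(X^\lambda_s - Z_s))\,ds - 2\int_0^t\langle G_s, X^\lambda_s - Z_s\rangle_{\mathcal{F}_e^*}\,ds$ by \eqref{rrw3}, and the first integrand is bounded above by $\psi_\lambda(Z_s)-\psi_\lambda(X^\lambda_s)$. The noise cross term is handled through \textbf{(H4)(i)} and \eqref{fe1}, producing a Gronwall-type remainder $C\mathbb{E}\int_0^t\|X^\lambda_s-Z_s\|^2_{\mathcal{F}_e^*}ds$. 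Taking expectations, letting $\lambda\to 0$, using weak lower semicontinuity of $\varphi$ and the monotone convergence $\psi_\lambda\uparrow\psi$ yield \eqref{svi3}.

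For uniqueness the essential obstruction is that neither SVI solution is a priori a solution of \eqref{svi2}, so one cannot directly insert $X$ or $Y$ as the test process in the other's SVI. The strategy is to build admissible test pairs from each solution by mollifying with the semigroup. Under the ultraboundedness assumption \eqref{uniqueness assumption}, $P_{\varepsilon}$ maps $L^1(\mu)$ into $L^2(\mu)\cap L^\infty(\mu)$ and smooths into $D(L)$, so Proposition \ref{construction of un} provides, from $Y$, an approximating sequence $Z^n$ taking values in $K=L^1(\mu)\cap L^\infty(\mu)\cap\mathcal{F}_e^*$ together with drifts $G^n\in L^2([0,T]\times\Omega;\mathcal{F}_e^*)$ satisfying \eqref{svi2}, such that $Z^n\to Y$ in $L^2(\Omega;C([0,T];\mathcal{F}_e^*))$, $\varphi(Z^n)\to\varphi(Y)$ in $L^1$, and $G^n$ in the limit identifies with $L\eta$ for some $\eta(s)\in\beta(Y_s)$. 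The symmetric construction from $X$ produces $(\tilde X^n,\tilde G^n)$. Inserting $(Z^n,G^n)$ into the SVI for $X$ and $(\tilde X^n,\tilde G^n)$ into the SVI for $Y$, summing, and passing $n\to\infty$, the terms $2\mathbb{E}\int_0^t\varphi(X_s)ds + 2\mathbb{E}\int_0^t\varphi(Y_s)ds$ cancel with the counterparts on the right-hand side through the subdifferential relation (since $\eta_s\in\beta(Y_s)$ implies $\psi(X_s)\geq\psi(Y_s)+\eta_s(X_s-Y_s)$ a.e., and the dual pairing $\langle G^n_s, X_s-Z^n_s\rangle$ converges to $\mu(\eta_s(Y_s-X_s))$ by \eqref{rrw3}), leaving
\begin{equation*}
\mathbb{E}\|X_t - Y_t\|^2_{\mathcal{F}_e^*} \leq \mathbb{E}\|x_0 - y_0\|^2_{\mathcal{F}_e^*} + C\mathbb{E}\int_0^t \|X_s - Y_s\|^2_{\mathcal{F}_e^*}\,ds,
\end{equation*}
and Gronwall closes \eqref{th1}.

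The hard part will be the construction and verification of the approximating pairs $(Z^n,G^n)$: one must ensure $Z^n$ lies in $K$ so that $B(\cdot,Z^n)$ is well-defined, that $(Z^n,G^n)$ actually solves \eqref{svi2} (which forces a careful identification of the semigroup commutator with $L$ and with the stochastic integral), and that $\varphi(Z^n)$ does not blow up in the limit. This is precisely the content of Proposition \ref{construction of un} and the reason the ultraboundedness $\|P_t\|_{1\to 2}<\infty$ is invoked, since without it the semigroup mollification cannot lift the mere $L^1$ control coming from $\varphi(Y)\in L^1$ into the $L^2\cap L^\infty$ regularity required for \textbf{(H4)}.
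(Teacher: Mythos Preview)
Your existence sketch is close in spirit but omits a structural ingredient: the paper's approximating equation is
\[
dX^\varepsilon_t = L\bigl(\beta^\varepsilon(X^\varepsilon_t) + \varepsilon X^\varepsilon_t\bigr)\,dt + B(t,X^\varepsilon_t)\,dW_t,
\]
with the extra viscosity term $\varepsilon X^\varepsilon$. This term is what supplies the $F_{1,2}$-bound $\varepsilon\,\Bbb{E}\int_0^T\|X^\varepsilon_s\|^2_{F_{1,2}}\,ds\leq C$ in \eqref{3.4}, and hence the regularity $\beta^\varepsilon(X^\varepsilon)\in F_{1,2}$ needed to apply \eqref{rrw3} and to make $L\beta^\varepsilon(X^\varepsilon)\in L^2([0,T]\times\Omega;\mathcal{F}_e^*)$. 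Without it your appeal to Krylov--Rozovskii is not justified, and the later manipulations (in particular the admissibility of the test pair in the uniqueness step) break down. Relatedly, the paper does not pass to a weak subsequential limit: it runs a Cauchy argument in $\varepsilon$ (via \eqref{beta3}) to get \emph{strong} convergence in $L^2(\Omega;C([0,T];\mathcal{F}_e^*))$, first for $x_0\in L^2(\mu)\cap\mathcal{F}_e^*$ and then extending by Lipschitz continuity in the initial datum. Strong convergence is what lets the term $C\,\Bbb{E}\int_0^t\|X_s-Z_s\|^2_{\mathcal{F}_e^*}\,ds$ on the right of \eqref{svi3} pass to the limit; weak convergence would give the inequality in the wrong direction there.

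The uniqueness proposal has a more serious gap. You describe Proposition~\ref{construction of un} as producing, from an SVI solution $Y$, an admissible \emph{process} pair $(Z^n,G^n)$ solving \eqref{svi2} with $G^n\to L\eta$ for some $\eta\in\beta(Y)$. That is not what the proposition says, and such a construction is essentially impossible: an SVI solution $Y$ carries no selection $\eta\in\beta(Y)$ with $L\eta\in L^2([0,T]\times\Omega;\mathcal{F}_e^*)$ --- possessing one would make $Y$ a strong solution. Proposition~\ref{construction of un} is a purely \emph{pointwise} statement: for fixed $v\in\mathcal{F}_e^*$ with $\varphi(v)<\infty$ it produces $v_m\in L^2(\mu)\cap\mathcal{F}_e^*$ with $v_m\rightharpoonup v$ and $\varphi(v_m)\to\varphi(v)$. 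The paper's uniqueness argument is asymmetric and avoids your obstruction entirely: one takes the \emph{approximating strong solutions} $Y^{\varepsilon,n}$ of \eqref{eq:3} (with $y_0^n\to y_0$) as the test process $Z$ in the SVI for the arbitrary solution $X$, with $G=\varepsilon LY^{\varepsilon,n}+L\beta^\varepsilon(Y^{\varepsilon,n})$; these are admissible precisely because of the $F_{1,2}$-bound coming from the $\varepsilon X$ term. Proposition~\ref{construction of un} enters only at one spot --- to upgrade the convexity inequality
\[
-\langle L\beta^\varepsilon(Y^{\varepsilon,n}_s),\,v-Y^{\varepsilon,n}_s\rangle_{\mathcal{F}_e^*}+\varphi(Y^{\varepsilon,n}_s)\leq \varphi(v)+\varepsilon c\bigl(|Y^{\varepsilon,n}_s|_2^2+\mu(E)\bigr),
\]
which is first proved for $v\in L^2(\mu)\cap\mathcal{F}_e^*$ via \eqref{rrw3}, to $v=X_s\in L^1(\mu)\cap\mathcal{F}_e^*$ by approximating $X_s$ pointwise in $(s,\omega)$. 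After that, sending $\varepsilon\to 0$ then $n\to\infty$ and applying Gronwall gives \eqref{th1} with $Y$ the \emph{constructed} solution; uniqueness of arbitrary SVI solutions then follows by comparing each to the constructed one. Your symmetric two-sided scheme, summing SVIs for $X$ and $Y$, cannot be carried out because neither SVI solution furnishes an admissible $(Z,G)$ for the other.
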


\begin{remark}\label{ultraboundeded}
\textbf{Explanation of \eqref{uniqueness assumption}}: Throughout the paper we use $\|P_t\|_{p\rightarrow q}$ to denote the norm of $P_t$ from $L^p(\mu)$ to $L^q(\mu)$. Recall from \cite[page 154, Definition 3.3.1]{Wang} that $P_t$ is ultrabounded means $\|P_t\|_{2\rightarrow\infty}<\infty$, $\forall t>0$. But since $P_t$ is symmetric, we have, $\|P_t\|^2_{2\rightarrow\infty}=\|P_t\|^2_{1\rightarrow2}=\|P_{2t}\|_{1\rightarrow\infty}$, $t>0$, (cf. the proof of \cite[Theorem 3.3.15]{Wang}). In particular, our results apply to $-f(-L)$, where $f$ is a Bernstein function as in e.g. Examples \ref{Bernstein function proposition 1} and \ref{Bernstein function proposition 2}.
\end{remark}

To prove Theorem \ref{theorem}, we need the existence of strong solutions to the following approximating equations for \eqref{eq:1}.
\begin{eqnarray}\label{eq:3}
\left\{
  \begin{array}{ll}
    dX^\varepsilon_t=L\big(\beta^\varepsilon(X^\varepsilon_t)+\varepsilon X^\varepsilon_t\big)dt+B(t,X^\varepsilon_t)dW_t, & \hbox{} \\
    X^\varepsilon_0=x_0, & \hbox{}
  \end{array}
\right.
\end{eqnarray}
with $0<\varepsilon<1$, $x_0\in L^2(\Omega,\mathcal{F}_0;L^2(\mu)\cap\mathcal{F}_e^*)$. Here $\beta^\varepsilon:\Bbb{R}\rightarrow\Bbb{R}$ is the Yosida approximation of $\beta$ (See Appendix \ref{Moreau Yosida} for more properties of $\beta^\varepsilon$). We have the following Proposition for \eqref{eq:3}.
\begin{proposition}\label{theorem2}
Suppose \textbf{(H1)'}, \textbf{(H3)}, \textbf{(H4)} and \textbf{(T)} are satisfied. Let $\varepsilon\in(0,1)$, $x_0\in L^2(\Omega,\mathcal{F}_0;L^2(\mu)\cap\mathcal{F}_e^*)$. Then there exists an $\mathcal{F}_t$-adapted strong solution to \eqref{eq:3} such that
\begin{eqnarray}\label{3.1}
X^\varepsilon\in L^2(\Omega; C([0,T];\mathcal{F}_e^*))\cap L^2\big([0,T]\times\Omega;L^2(\mu)),
\end{eqnarray}
\begin{equation}\label{3.2}
\int_0^\cdot\big(\beta^\varepsilon(X^\varepsilon_s)+\varepsilon X^\varepsilon_s\big)ds\in C([0,T];\mathcal{F}_e),~~\Bbb{P}-a.s.,
\end{equation}
\begin{equation}\label{3.3}
X^\varepsilon_t=x_0+L\int_0^t\big(\beta^\varepsilon(X^\varepsilon_s)+\varepsilon X^\varepsilon_s\big)ds+\int_0^tB(s,X^\varepsilon_s)dW_s,~\text{holds\ in}\ \mathcal{F}_e^*,~\Bbb{P}-a.s.,~ \forall t\in[0,T].
\end{equation}
Moreover, there exists a constant $C\in(0,\infty)$ which is independent of $\varepsilon$ such that
\begin{eqnarray}\label{3.4}
&& \Bbb{E}\Big[\sup_{t\in[0,T]}|X^\varepsilon_t|^2_2\Big]+\varepsilon\Bbb{E}\int_0^T\|X^\varepsilon_s\|^2_{F_{1,2}}ds\leq C\big(\Bbb{E}|x_0|^2_2+1\big).
\end{eqnarray}
\end{proposition}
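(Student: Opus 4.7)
My plan is to apply the variational framework for nonlinear SPDEs (Krylov--Rozovskii / Liu--R\"ockner) to the approximating equation. I set up the Gelfand triple $V := F_{1,2} \hookrightarrow H := L^2(\mu) \hookrightarrow V^* := F_{1,2}^*$. Since $\beta$ is maximal monotone with $0\in\beta(0)$, the Yosida approximation satisfies $\beta^\varepsilon(0)=0$ and is Lipschitz, and the resolvent $J^\varepsilon := (I+\varepsilon\beta)^{-1}$ is a normal contraction. By the contraction property of Dirichlet forms, $\Phi(r) := \beta^\varepsilon(r) + \varepsilon r$ maps $V$ boundedly into $V$, so $A(u) := L\Phi(u)$, defined by $\langle A(u), w\rangle_{V^*,V} = -\mathcal{E}(\Phi(u), w)$, is bounded $V \to V^*$. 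The noise $B$, originally defined on $K$, is first extended by continuity to $L^2(\mu) \cap \mathcal{F}_e^*$ using the Lipschitz bound \textbf{(H4)(i)} (reformulated as \eqref{fe1}) and the density of $K$ in that space.

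The coercivity condition is the key input: since $J^\varepsilon$ is a normal contraction, $\mathcal{E}(J^\varepsilon u, J^\varepsilon u) \leq \mathcal{E}(u,u)$ and Cauchy--Schwarz give $\mathcal{E}(J^\varepsilon u, u) \leq \mathcal{E}(u,u)$, hence $\mathcal{E}(\beta^\varepsilon(u), u) = \varepsilon^{-1}\mathcal{E}(u - J^\varepsilon u, u) \geq 0$. Therefore
\begin{equation*}
2\langle A(u), u\rangle_{V^*,V} = -2\mathcal{E}(\Phi(u), u) \leq -2\varepsilon\,\mathcal{E}(u,u) = -2\varepsilon\|u\|_V^2 + 2\varepsilon|u|_2^2,
\end{equation*}
which combined with \textbf{(H4)(iii)} supplies the required coercivity. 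Hemicontinuity follows from continuity of $\beta^\varepsilon$, and boundedness $\|A(u)\|_{V^*} \leq C(\varepsilon)\|u\|_V$ follows from the Lipschitz Dirichlet-form contraction. Once the (local-)monotonicity hypothesis is checked, the variational existence theorem yields a unique adapted $X^\varepsilon \in L^2(\Omega; C([0,T]; H)) \cap L^2([0,T]\times\Omega; V)$ satisfying \eqref{3.3} in $V^* \supset \mathcal{F}_e^*$.

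The a priori bound \eqref{3.4} follows by It\^o's formula in the Gelfand triple applied to $|X^\varepsilon_t|_2^2$: the coercivity estimate controls the drift by $-\varepsilon\|X^\varepsilon_s\|_V^2 + \varepsilon|X^\varepsilon_s|_2^2$, \textbf{(H4)(iii)} controls the quadratic variation by $C(|X^\varepsilon_s|_2^2 + 1)$, and BDG plus Gronwall close the estimate with a constant independent of $\varepsilon$. Property \eqref{3.2} is immediate since $\Phi(X^\varepsilon) \in L^2([0,T]\times\Omega; V) \subset L^2([0,T]\times\Omega; \mathcal{F}_e)$. The continuity of $X^\varepsilon$ in $\mathcal{F}_e^*$ asked for in \eqref{3.1} is not automatic from the $C([0,T]; H)$-regularity and must be read off from \eqref{3.3} interpreted in $\mathcal{F}_e^*$: the drift $L\int_0^\cdot \Phi(X^\varepsilon_s)\,ds$ is continuous in $\mathcal{F}_e^*$ by Bochner integration of an $\mathcal{F}_e$-valued integrand and the isometry $L:\mathcal{F}_e \to \mathcal{F}_e^*$, while the stochastic integral is a continuous $\mathcal{F}_e^*$-valued martingale by \eqref{fe2}.

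The main obstacle I expect is verifying the monotonicity hypothesis of the variational theorem. The pointwise monotonicity of $\Phi$ does not lift cleanly to $\mathcal{E}(\Phi(u)-\Phi(v), u-v) \geq 0$ for general Dirichlet forms, since the ``difference'' version of the contraction property can fail when $\mathcal{E}$ has nonlocal parts. This forces the use of the local-monotonicity variant of the Liu--R\"ockner framework, in which an auxiliary term bounded by $\|u\|_V^2 + \|v\|_V^2$ is allowed on the right-hand side; the strict coercivity coming from the $\varepsilon u$-term provides the compensating slack. This is where most of the technical work lies.
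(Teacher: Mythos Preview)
Your approach differs substantially from the paper's, and the obstacle you flag is real and not resolved by your proposed fix. The paper does not run the variational framework directly on \eqref{eq:3}. Instead it invokes \cite[Theorem 3.2]{RWX2021} for \eqref{3.1}--\eqref{3.3}, and for \eqref{3.4} it passes through a further $\nu$-approximation $dX^{\varepsilon,\nu}+(\nu-L)(\beta^\varepsilon(X^{\varepsilon,\nu})+\varepsilon X^{\varepsilon,\nu})\,dt=B\,dW$, cites the uniform bound at that level from \cite[Claim 3.1]{RWX2022}, extracts weak/weak$^*$ limits in $L^2([0,T]\times\Omega;F_{1,2})$ and $L^2(\Omega;L^\infty([0,T];L^2(\mu)))$, identifies them with $X^\varepsilon$ via the strong convergence $X^{\varepsilon,\nu}\to X^\varepsilon$ in $L^2(\Omega;C([0,T];F^*_{1,2}))$ already established in \cite{RWX2021}, and concludes by weak lower semicontinuity of the norms.

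Your Gelfand triple $F_{1,2}\subset L^2(\mu)\subset F^*_{1,2}$ is the wrong one for existence. In that triple, monotonicity of $u\mapsto L\Phi(u)$ requires $\mathcal{E}(\Phi(u)-\Phi(v),u-v)\geq 0$, and this genuinely fails for Dirichlet forms with a nontrivial jump part (take $\Phi(r)=r^+$ and evaluate the jump integrand at suitable four-tuples). Your local-monotonicity patch does not close: after Cauchy--Schwarz the defect term is bounded by $\varepsilon^{-1}(\|u\|_{\mathcal{F}_e}+\|v\|_{\mathcal{F}_e})\|u-v\|_{\mathcal{F}_e}$, i.e.\ by $V$-norms of $u,v$ and of $u-v$, not by $\|u-v\|_H^2$ times a locally bounded $\rho(v)$, so it does not fit the Liu--R\"ockner local-monotonicity hypothesis. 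The $\varepsilon$-coercivity cannot absorb an $\varepsilon^{-1}$-sized remainder. The standard porous-media triple takes $H=\mathcal{F}_e^*$ (or $F^*_{1,2,\nu}$) and $V=L^2(\mu)\cap\mathcal{F}_e^*$; there $\langle L\Phi(u)-L\Phi(v),u-v\rangle_H=-\int_E(\Phi(u)-\Phi(v))(u-v)\,d\mu\leq 0$ follows directly from \eqref{rrw3} and pointwise monotonicity of $\Phi$, with no structural assumption on $\mathcal{E}$ beyond \textbf{(T)}. But in that triple the It\^o formula only gives the $\mathcal{F}_e^*$-norm, so \eqref{3.4} (an $L^2$ and $F_{1,2}$ bound) is genuinely additional regularity; this is exactly why the paper routes through the $\nu$-level and takes weak limits rather than attempting a one-shot variational argument.
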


\begin{proof}
\eqref{3.1}-\eqref{3.3} follow from \cite[Theorem 3.2]{RWX2021}, in which we take $m:=1$ and $\mu(E)<\infty$. To prove \eqref{3.4}, consider the following equations for \eqref{eq:3} used in \cite[Theorem 3.2]{RWX2021}
\begin{eqnarray*}\label{eq:4}
\left\{
  \begin{array}{ll}
    dX^{\varepsilon,\nu}_t+(\nu-L)\big(\beta^\varepsilon(X^{\varepsilon,\nu}_t+\varepsilon X^{\varepsilon,\nu}_t)dt=B(t,X^{\varepsilon,\nu}_t)dW_t, & \hbox{} \\
    X^{\varepsilon,\nu}_0=x_0\in L^2(\Omega,\mathcal{F}_0;L^2(\mu)), & \hbox{}
  \end{array}
\right.
\end{eqnarray*}
with $\varepsilon\in(0,1)$, $\nu\in(0,1]$. Recall from \cite{RWX2022} that $L^2(\mu)\subset F_{1,2}^*$ continuously and densely (cf. four lines before \cite[(2.5)]{RWX2022}). From \cite[Claim 3.1]{RWX2022} we know that there exists a constant $C>0$ which is independent of $\varepsilon\in(0,1)$, $\nu\in(0,1]$ such that
\begin{eqnarray}\label{lemma2.9}
\Bbb{E}\Big[\sup_{t\in[0,T]}|X^{\varepsilon,\nu}_t|_2^2\Big]+\varepsilon\Bbb{E}\int_0^T\|X^{\varepsilon,\nu}_s\|^2_{F_{1,2}}ds\leq C\big(\Bbb{E}|x_0|_2^2+1\big),
\end{eqnarray}
so we may extract a weakly (weak* respectively) convergent subsequence (for simplicity we stick with the same notation $X^{\varepsilon,\nu}$), and there exists an $\widetilde{X^{\varepsilon}}\in L^2([0,T]\times\Omega;F_{1,2})\cap L^2(\Omega;L^\infty([0,T];L^2(\mu)))$ such that as $\nu\rightarrow0$,
\begin{eqnarray*}
&&\!\!\!\!\!\!\!\!X^{\varepsilon,\nu}\rightharpoonup \widetilde{X^{\varepsilon}}, ~\text{in}~L^2([0,T]\times\Omega;F_{1,2})\subset L^2([0,T]\times\Omega;L^2(\mu))\subset L^2([0,T]\times\Omega;F^*_{1,2})\nonumber\\
&&\!\!\!\!\!\!\!\!X^{\varepsilon,\nu}\rightharpoonup^* \widetilde{X^{\varepsilon}}, ~\text{in}~L^2(\Omega;L^\infty([0,T];L^2(\mu)))\subset L^2(\Omega;L^2([0,T];L^2(\mu)))\subset L^2([0,T]\times\Omega;F^*_{1,2}).
\end{eqnarray*}
But from the proof of \cite[Theorem 3.2]{RWX2021} (cf. arguments around \cite[(4.52)]{RWX2021}), we know that $X^{\varepsilon,\nu}\rightarrow X^{\varepsilon}$ in $L^2(\Omega;C([0,T];F^*_{1,2}))$, hence also in $L^2(\Omega;L^2([0,T];F^*_{1,2}))\subset L^2([0,T]\times\Omega;F^*_{1,2})$, therefore, $\widetilde{X^{\varepsilon}}=X^{\varepsilon}$, $dt\otimes d\Bbb{P}\otimes d\mu$-a.s.. By weak lower semicontinuity of the norms we may pass to the limit in \eqref{lemma2.9} and get \eqref{3.4} as claimed.
\end{proof}

\begin{remark}\label{drop of (H4) in EJP}
We can drop the assumptions (H3)(ii) and (H4) in \cite{RWX2021}, because (H3)(ii) is used in the proof of \cite[Lemma 4.3]{RWX2021}, while (H4) is used in the proof of \cite[Lemma 4.4]{RWX2021} (cf. (4.34), (4.40) and (4.45) in \cite{RWX2021}), which are not needed in our paper.
\end{remark}

\begin{remark}\label{linear growth condition of B}
The right hand-sides of \textbf{(H4)(ii)} and \textbf{(H4)(iii)} look different from the assumptions (H2)(ii) and (H3)(i) in \cite[Theorem 3.2]{RWX2021}, where there is no constant term. However, by using similar ideas as in the proofs of \cite[Theorem 3.2]{RWX2021} and \cite[Claim 3.1]{RWX2022}, it is not difficult to get Proposition \ref{theorem2}, see e.g. \cite[Lemma 4.1]{WZJDE} and \cite[Proposition 4.1]{WZJEE}, which are follow-up works of \cite{RWX2022}, for the treatments of the linear growth conditions as in this paper. So, here we still regard \textbf{(H4)(ii)} and \textbf{(H4)(iii)} as the corresponding assumptions in \cite[Theorem 3.2]{RWX2021}.
\end{remark}

\begin{remark}\label{interchange order}
By the property of the Bochner integral (see e.g. \cite[page 212, Proposition A.2.2]{LR}), \eqref{3.4} allows us to interchange the order of $L$ and $\int_0^t~\cdot~ ds$ in \eqref{3.3}, i.e., we have
\begin{equation}\label{3.3.3}
X^\varepsilon_t=x_0+\int_0^tL\big(\beta^\varepsilon(X^\varepsilon_s)+\varepsilon X^\varepsilon_s\big)ds+\int_0^tB(s,X^\varepsilon_s)dW_s,~\text{holds\ in}\ \mathcal{F}_e^*,~\Bbb{P}-a.s.,~ \forall t\in[0,T].
\end{equation}
\end{remark}

\begin{proof}\textbf{(Proof of Theorem \ref{theorem})}
\\
\textbf{(Convergence)}~~From \eqref{3.3.3} we know that for two strong solutions $X^{\varepsilon_1}$, $X^{\varepsilon_2}$ to \eqref{eq:3} with initial conditions $x_0^1, x_0^2\in L^2(\Omega,\mathcal{F}_0;L^{2}(\mu)\cap\mathcal{F}_e^*)$,
\begin{eqnarray*}
&&\!\!\!\!\!\!\!\!X^{\varepsilon_1}_t-X^{\varepsilon_2}_t\nonumber\\
=&&\!\!\!\!\!\!\!\!x_0^1-x_0^2+\int_0^tL\big(\beta^{\varepsilon_1}(X^{\varepsilon_1}_s)-\beta^{\varepsilon_2}(X^{\varepsilon_2}_s)\big)ds+\int_0^tL\big(\varepsilon_1 X^{\varepsilon_1}_s-\varepsilon_2 X^{\varepsilon_2}_s\big)ds\nonumber\\
&&\!\!\!\!\!\!\!\!+\int_0^t\big(B(s,X^{\varepsilon_1}_s)-B(s,X^{\varepsilon_2}_s)\big)dW_s,\ \text{holds\ in}\ \mathcal{F}_e^*,\ \Bbb{P}-a.s.,\ \forall t\in[0,T].
\end{eqnarray*}
Applying It\^{o}'s formula to $e^{-Kt}\|X^{\varepsilon_1}_t-X^{\varepsilon_2}_t\|^2_{\mathcal{F}_e^*}$ with $K\in[0,\infty)$, we observe that $\forall t\in [0,T]$,
\begin{eqnarray}\label{main}
&&\!\!\!\!\!\!\!\!e^{-Kt}\|X^{\varepsilon_1}_t-X^{\varepsilon_2}_t\|^2_{\mathcal{F}_e^*}\nonumber\\
=&&\!\!\!\!\!\!\!\!\|x_0^1-x_0^2\|^2_{\mathcal{F}_e^*}-K\int_0^te^{-Ks}\|X^{\varepsilon_1}_s-X^{\varepsilon_2}_s\|^2_{\mathcal{F}_e^*}ds\nonumber\\
&&\!\!\!\!\!\!\!\!+2\int_0^te^{-Ks}\langle L\big(\beta^{\varepsilon_1}(X^{\varepsilon_1}_s)-\beta^{\varepsilon_2}(X^{\varepsilon_2}_s)\big), X^{\varepsilon_1}_s-X^{\varepsilon_2}_s\rangle_{\mathcal{F}_e^*}ds\nonumber\\
&&\!\!\!\!\!\!\!\!+2\int_0^te^{-Ks}\langle L\big(\varepsilon_1 X^{\varepsilon_1}_s-\varepsilon_2 X^{\varepsilon_2}_s\big), X^{\varepsilon_1}_s-X^{\varepsilon_2}_s\rangle_{\mathcal{F}_e^*}ds\nonumber\\
&&\!\!\!\!\!\!\!\!+2\int_0^te^{-Ks}\langle X^{\varepsilon_1}_s-X^{\varepsilon_2}_s, \big(B(s,X^{\varepsilon_1}_s)-B(s,X^{\varepsilon_2}_s)\big)dW_s\rangle_{\mathcal{F}_e^*}\nonumber\\
&&\!\!\!\!\!\!\!\!+\int_0^te^{-Ks}\|B(s,X^{\varepsilon_1}_s)-B(s,X^{\varepsilon_2}_s)\|^2_{L_2(U,\mathcal{F}_e^*)}ds.
\end{eqnarray}
For the fourth term in the right hand-side of \eqref{main}, from \eqref{3.1} we know that $X^{\varepsilon_1}, X^{\varepsilon_2}\in L^2(\Omega;C([0,T];\mathcal{F}_e^*))\cap L^2(\Omega\times[0,T];L^2(\mu))$, and from \eqref{3.4} we know that $\varepsilon_1X^{\varepsilon_1}, \varepsilon_2X^{\varepsilon_2}\in L^2(\Omega\times[0,T];F_{1,2})$. Hence by \eqref{rrw3} with $p=2$,
\begin{eqnarray*}\label{l2}
&&\!\!\!\!\!\!\!\!\langle L\big(\varepsilon_1 X^{\varepsilon_1}_s-\varepsilon_2 X^{\varepsilon_2}_s\big), X^{\varepsilon_1}_s-X^{\varepsilon_2}_s\rangle_{\mathcal{F}_e^*}\nonumber\\
=&&\!\!\!\!\!\!\!\!-\int_E\big(\varepsilon_1 X^{\varepsilon_1}_s-\varepsilon_2 X^{\varepsilon_2}_s\big)\cdot\big(X^{\varepsilon_1}_s-X^{\varepsilon_2}_s\big)d\mu\nonumber\\
\leq&&\!\!\!\!\!\!\!\! 2(\varepsilon_1+\varepsilon_2)\big(|X^{\varepsilon_1}_s|^{2}_{2}+|X^{\varepsilon_2}_s|^{2}_{2}\big),~~~ds\otimes\Bbb{P}-a.e..
\end{eqnarray*}
For the third term in the right hand-side of \eqref{main}, by \eqref{3.4} and \cite[page 35, Proposition 4.11]{MR} we know that $\beta^{\varepsilon_1}(X^{\varepsilon_1}_s), \beta^{\varepsilon_2}(X^{\varepsilon_2}_s)\in L^2(\Omega\times[0,T];F_{1,2})$, then by \eqref{3.1}, \eqref{rrw3} with $p=2$ and \eqref{beta3},
\begin{eqnarray*}\label{l1}
&&\!\!\!\!\!\!\!\!\langle L\big(\beta^{\varepsilon_1}(X^{\varepsilon_1}_s)-\beta^{\varepsilon_2}(X^{\varepsilon_2}_s)\big), X^{\varepsilon_1}_s-X^{\varepsilon_2}_s\rangle_{\mathcal{F}_e^*}\nonumber\\
=&&\!\!\!\!\!\!\!\!-\int_E\big(\beta^{\varepsilon_1}(X^{\varepsilon_1}_s)-\beta^{\varepsilon_2}(X^{\varepsilon_2}_s)\big)\cdot\big(X^{\varepsilon_1}_s-X^{\varepsilon_2}_s\big)d\mu\nonumber\\
\leq&&\!\!\!\!\!\!\!\! C(\varepsilon_1+\varepsilon_2)\big(|X^{\varepsilon_1}_s|^{2}_{2}+|X^{\varepsilon_2}_s|^{2}_{2}+\mu(E)\big),~~~ds\otimes\Bbb{P}-a.e..
\end{eqnarray*}
Thus, $\forall t\in [0,T]$, we have
\begin{eqnarray*}
&&\!\!\!\!\!\!\!\!e^{-Kt}\|X^{\varepsilon_1}_t-X^{\varepsilon_2}_t\|^2_{\mathcal{F}_e^*}\nonumber\\
=&&\!\!\!\!\!\!\!\!\|x_0^1-x_0^2\|^2_{\mathcal{F}_e^*}-K\int_0^te^{-Ks}\|X^{\varepsilon_1}_s-X^{\varepsilon_2}_s\|^2_{\mathcal{F}_e^*}ds\nonumber\\
&&\!\!\!\!\!\!\!\!+C(\varepsilon_1+\varepsilon_2)\int_0^te^{-Ks}\Big(|X^{\varepsilon_1}_s|^{2}_{2}+|X^{\varepsilon_2}_s|^{2}_{2}+\mu(E)\Big)ds\nonumber\\
&&\!\!\!\!\!\!\!\!+2\int_0^te^{-Ks}\langle X^{\varepsilon_1}_s-X^{\varepsilon_2}_s, \big(B(s,X^{\varepsilon_1}_s)-B(s,X^{\varepsilon_2}_s)\big)dW_s\rangle_{\mathcal{F}_e^*}\nonumber\\
&&\!\!\!\!\!\!\!\!+\int_0^te^{-Ks}\|B(s,X^{\varepsilon_1}_s)-B(s,X^{\varepsilon_2}_s)\|^2_{L_2(U,\mathcal{F}_e^*)}ds.
\end{eqnarray*}
Using the Burkholder-Davis-Gundy (BDG) inequality for $p=1$, \eqref{fe1} and \eqref{3.4}, we obtain
\begin{eqnarray}\label{app1}
&&\!\!\!\!\!\!\!\!\Bbb{E}\sup_{t\in[0,T]}e^{-Kt}\|X^{\varepsilon_1}_t-X^{\varepsilon_2}_t\|^2_{\mathcal{F}_e^*}\nonumber\\
\leq&&\!\!\!\!\!\!\!\!2\Bbb{E}\|x_0^1-x_0^2\|^2_{\mathcal{F}_e^*}+C_{K,T}(\varepsilon_1+\varepsilon_2)\Big(\Bbb{E}|x_0^1|^{2}_{2}+\Bbb{E}|x_0^2|^{2}_{2}+1\Big),
\end{eqnarray}
for $K>0$ large enough, where $C_{K,T}$ depends on $K$ and $T$, but is independent of $\varepsilon_1$ and $\varepsilon_2$.

Now assume that $x_0^1=x_0^2=x_0\in L^2(\Omega,\mathcal{F}_0; L^2(\mu)\cap\mathcal{F}_e^*)$. Then from \eqref{app1} we have
\begin{eqnarray*}
&&\!\!\!\!\!\!\!\!\Bbb{E}\sup_{t\in[0,T]}e^{-Kt}\|X^{\varepsilon_1}_t-X^{\varepsilon_2}_t\|^2_{\mathcal{F}_e^*}\nonumber\\
\leq&&\!\!\!\!\!\!\!\!C(\varepsilon_1+\varepsilon_2)(\Bbb{E}|x_0|^{2}_{2}+1),
\end{eqnarray*}
where $C$ is independent of $\varepsilon_1$ and $\varepsilon_2$, and thus, by completeness there exists an $\mathcal{F}_t$-adapted process $X(\cdot,x_0)\in L^2(\Omega;C([0,T];\mathcal{F}_e^*))$ such that
\begin{eqnarray}\label{convergence1}
\Bbb{E}\sup_{t\in[0,T]}\|X^{\varepsilon}_t-X(t,x_0)\|^2_{\mathcal{F}_e^*}\rightarrow0,~~\text{as}~~\varepsilon\rightarrow0.
\end{eqnarray}
Then, for $x_0^1, x_0^2\in L^2(\Omega,\mathcal{F}_0;L^2(\mu)\cap\mathcal{F}_e^*)$, taking $\varepsilon_1, \varepsilon_2\rightarrow0$ in \eqref{app1}, we get
\begin{eqnarray}\label{app2}
\Bbb{E}\sup_{t\in[0,T]}e^{-Kt}\|X(t,x_0^1)-X(t,x_0^2)\|^2_{\mathcal{F}_e^*}\leq 2\Bbb{E}\|x_0^1-x_0^2\|^2_{\mathcal{F}_e^*}.
\end{eqnarray}
Then by \eqref{app2}
$$L^2(\mu)\cap\mathcal{F}_e^*\ni x_0\mapsto X(\cdot, x_0)\in L^2(\Omega;C([0,T];\mathcal{F}_e^*))$$
is Lipschitz continuous, hence it has a unique Lipschitz continuous extension from $\mathcal{F}_e^*$ to $L^2(\Omega;C([0,T];\mathcal{F}_e^*))$ which we denote by the the same symbol $X(\cdot,x_0)$, $x_0\in L^2(\Omega,\mathcal{F}_0;\mathcal{F}_e^*)$,
and this process is a candidate for the SVI solution to \eqref{eq:1} with initial value $x_0\in L^2(\Omega,\mathcal{F}_0;\mathcal{F}_e^*)$.


\vspace{3mm}

\textbf{(Regularity)}~~We are going to prove the regularity of $X:=X(\cdot,x_0)$ in the sense of \eqref{svii}, i.e., $\varphi(X)\in L^1([0,T]\times \Omega)$, where $\varphi$ is defined as in \eqref{convex}. Let $X^{\varepsilon,n}_t:=X^\varepsilon(t,x_0^n)$, $t\in[0,T]$, $\varepsilon\in(0,1)$, be strong solutions to \eqref{eq:3} with initial value $x_0^n\in L^2(\Omega,\mathcal{F}_0;L^2(\mu)\cap\mathcal{F}_e^*)$, where $\{x_0^n\}_{n\in\Bbb{N}}$ converges to $x_0$ in $L^2(\Omega,\mathcal{F}_0;\mathcal{F}_e^*)$ as $n\rightarrow\infty$ (such $\{x_0^n\}_{n\in\Bbb{N}}$ exists because $L^2(\mu)\cap\mathcal{F}_e^*$ is dense in $\mathcal{F}_e^*$, hence $L^2(\Omega,\mathcal{F}_0;L^2(\mu)\cap\mathcal{F}_e^*)$ is dense in $L^2(\Omega,\mathcal{F}_0;\mathcal{F}_e^*)$). Denote $X^n_t:=X(t,x_0^n)$, $t\in[0,T]$, the limit of $X^{\varepsilon,n}_t$ in \eqref{convergence1}. Applying It\^{o}'s formula to $e^{-Kt}\|X^{\varepsilon,n}_t\|^2_{\mathcal{F}_e^*}$, by \eqref{3.3.3} we obtain that $\forall t\in [0,T]$,

\begin{eqnarray*}
&&\!\!\!\!\!\!\!\!e^{-Kt}\|X^{\varepsilon,n}_t\|^2_{\mathcal{F}_e^*}\nonumber\\
=&&\!\!\!\!\!\!\!\!\|x_0^n\|^2_{\mathcal{F}_e^*}+2\int_0^te^{-Ks}\langle \varepsilon LX^{\varepsilon,n}_s,X^{\varepsilon,n}_s\rangle_{\mathcal{F}_e^*} ds\nonumber\\
&&\!\!\!\!\!\!\!\!+2\int_0^te^{-Ks}\langle L\beta^\varepsilon(X^{\varepsilon,n}_s),X^{\varepsilon,n}_s\rangle_{\mathcal{F}_e^*} ds+2\int_0^te^{-Ks}\langle X^{\varepsilon,n}_s,B(s,X^{\varepsilon,n}_s)dW_s\rangle_{\mathcal{F}_e^*}\nonumber\\
&&\!\!\!\!\!\!\!\!+2\int_0^te^{-Ks}\|B(s,X^{\varepsilon,n}_s)\|^2_{L_2(U,\mathcal{F}_e^*)}ds-K\int_0^te^{-Ks}\|X^{\varepsilon,n}_s\|^2_{\mathcal{F}_e^*}ds.
\end{eqnarray*}
By \eqref{3.1}, \eqref{3.4} and \eqref{rrw3} with $p=2$, we have
\begin{eqnarray*}
&&\!\!\!\!\!\!\!\!\langle \varepsilon LX^{\varepsilon,n}_s,X^{\varepsilon,n}_s\rangle_{\mathcal{F}_e^*}=-\varepsilon|X^{\varepsilon,n}_s|_2^2\leq0,~~~ds\otimes\Bbb{P}-a.e..
\end{eqnarray*}
For $v\in \mathcal{F}_e^*$, we set
\begin{eqnarray}\label{convexx}
\varphi^\varepsilon(v):=\left\{
           \begin{array}{ll}
             &\!\!\!\!\!\!\!\!\int_E\psi^\varepsilon(v(x))\mu(dx),~~~\text{if}~~ v\in L^{2}(\mu)\cap\mathcal{F}_e^*,\\
             &\!\!\!\!\!\!\!\!+\infty,~~~~~~~~~~~~~~~~~~~~ \text{else}.
           \end{array}
         \right.
\end{eqnarray}
By \eqref{3.4} and \cite[page 35, Proposition 4.11]{MR}, \eqref{3.1}, \eqref{rrw3} with $p=2$, and since $\beta^\varepsilon=\partial \psi^\varepsilon$ and $\psi^\varepsilon$ is convex (see Appendix \ref{Moreau Yosida}), we have
\begin{eqnarray}\label{con}
&&\!\!\!\!\!\!\!\!\langle L\beta^\varepsilon(X^{\varepsilon,n}_s),X^{\varepsilon,n}_s\rangle_{\mathcal{F}_e^*}\nonumber\\
=&&\!\!\!\!\!\!\!\!\int_E\partial\psi^\varepsilon(X^{\varepsilon,n}_s)\cdot(0-X^{\varepsilon,n}_s)d\mu\nonumber\\
\leq&&\!\!\!\!\!\!\!\!-\varphi^\varepsilon(X^{\varepsilon,n}_s), ~~~ds\otimes\Bbb{P}-a.e..
\end{eqnarray}
So, $\forall t\in [0,T]$, we have
\begin{eqnarray*}
&&\!\!\!\!\!\!\!\!e^{-Kt}\|X^{\varepsilon,n}_t\|^2_{\mathcal{F}_e^*}\nonumber\\
\leq&&\!\!\!\!\!\!\!\!\|x_0^n\|^2_{\mathcal{F}_e^*}-2\int_0^te^{-Ks}\varphi^\varepsilon(X^{\varepsilon,n}_s)ds+\int_0^te^{-Ks}\|B(s,X^{\varepsilon,n}_s)\|^2_{L_2(L^2(\mu),\mathcal{F}_e^*)}ds\nonumber\\
&&\!\!\!\!\!\!\!\!+2\int_0^te^{-Ks}\langle X^{\varepsilon,n}_s, B(s,X^{\varepsilon,n}_s)dW_s\rangle_{\mathcal{F}_e^*}-K\int_0^te^{-Ks}\|X^{\varepsilon,n}_s\|^2_{\mathcal{F}_e^*}ds,
\end{eqnarray*}
where the fourth term in the right hand-side of the above inequality is a continuous real-valued martingale (cf. e.g. \cite[Lemma I.0.3]{LR}).
After taking expectation on both sides of the above inequality, by \eqref{fe2}, choosing $K:=C_2$, we get $\forall t\in [0,T]$,
\begin{eqnarray*}
\Bbb{E}e^{-C_2t}\|X^{\varepsilon,n}_t\|^2_{\mathcal{F}_e^*}\leq \Bbb{E}\|x_0^n\|^2_{\mathcal{F}_e^*}-2e^{-C_2t}\Bbb{E}\int_0^t\varphi^\varepsilon(X^{\varepsilon,n}_s)ds+\int_0^te^{-C_2s}C_2ds.
\end{eqnarray*}
Choosing $t=T$ and multiplying both sides of the above inequality by $\frac{1}{2}e^{C_2T}$,
\begin{eqnarray}\label{app3}
\Bbb{E}\int_0^T\varphi^\varepsilon(X^{\varepsilon,n}_s)ds\leq C_T\big(\Bbb{E}\|x_0^n\|^2_{\mathcal{F}_e^*}+1\big)<\infty,
\end{eqnarray}
where $C_T$ is a constant which is dependent on $T$, but independent of $\varepsilon$, $n$.

By \eqref{convex}, \eqref{convexx}, \eqref{yosida3} and \eqref{beta}, we have for $v\in L^2(\mu)\cap\mathcal{F}_e^*$,
\begin{eqnarray}\label{epsilon}
&&\!\!\!\!\!\!\!\!|\varphi^\varepsilon(v)-\varphi(v)|\nonumber\\
\leq&&\!\!\!\!\!\!\!\!\int_E|\psi^\varepsilon(v(x))-\psi(v(x))|\mu(dx)\nonumber\\
\leq&&\!\!\!\!\!\!\!\!\int_E\varepsilon|\beta(v(x))|^2\mu(dx)\nonumber\\
\leq &&\!\!\!\!\!\!\!\!c\varepsilon\int_E(|v(x)|^{2}+1)\mu(dx)=c\varepsilon\big(|v|_{2}^{2}+\mu(E)\big),
\end{eqnarray}
which implies
\begin{eqnarray*}
\varphi(X^{\varepsilon,n}_s)-\varphi^\varepsilon(X^{\varepsilon,n}_s)\leq c\varepsilon\big(|X^{\varepsilon,n}_s|_{2}^{2}+\mu(E)\big),~~~\ ds\otimes \Bbb{P}-a.e.,
\end{eqnarray*}
so,
\begin{eqnarray}\label{estimate4}
\Bbb{E}\int_0^T\varphi^\varepsilon(X^{\varepsilon,n}_s)ds\geq\Bbb{E}\int_0^T\varphi(X^{\varepsilon,n}_s)ds-c\varepsilon\Bbb{E}\int_0^T\big(|X^{\varepsilon,n}_s|_{2}^{2}+\mu(E)\big)ds.
\end{eqnarray}
Since $X^\varepsilon(s,x_0^n)\rightarrow X(s,x_0^n)$ in $L^2(\Omega;C([0,T];\mathcal{F}_e^*))$ as $\varepsilon\rightarrow0$, hence also in $L^2(\Omega\times [0,T];\mathcal{F}_e^*)$. By selecting a subsequence if necessary, $X^\varepsilon(s,x_0^n)\rightarrow X(s,x_0^n)$, $ds\otimes\Bbb{P}$-a.s., in $\mathcal{F}_e^*$. Hence by the lower semicontinuity of $\varphi$ on $\mathcal{F}_e^*$ (cf. Lemma \ref{cvx}), Fatou's Lemma, \eqref{estimate4} and \eqref{3.4}, we conclude
\begin{eqnarray}\label{app4}
&&\!\!\!\!\!\!\!\!\Bbb{E}\int_0^T\varphi(X^{n}_s)ds\nonumber\\
\leq&&\!\!\!\!\!\!\!\!\liminf_{\varepsilon\rightarrow0}\Bbb{E}\int_0^T\varphi(X^{\varepsilon,n}_s)ds\nonumber\\
\leq&&\!\!\!\!\!\!\!\!\liminf_{\varepsilon\rightarrow0}\Big[\Bbb{E}\int_0^T\varphi^\varepsilon(X^{\varepsilon,n}_s)ds+c\varepsilon\Bbb{E}\int_0^T\big(|X^{\varepsilon,n}_s|_{2}^{2}+\mu(E))ds\Big]\nonumber\\
\leq&&\!\!\!\!\!\!\!\!\liminf_{\varepsilon\rightarrow0}\Bbb{E}\int_0^T\varphi^\varepsilon(X^{\varepsilon,n}_s)ds.
\end{eqnarray}
Since $X(s,x_0^n)\rightarrow X(s,x_0)$ in $L^2(\Omega;C([0,T];\mathcal{F}_e^*))$ as $n\rightarrow\infty$, hence also in $L^2(\Omega\times [0,T];\mathcal{F}_e^*)$. By selecting a subsequence if necessary, $X(s,x_0^n)\rightarrow X(s,x_0)$, $ds\otimes\Bbb{P}$-a.s., in $\mathcal{F}_e^*$. Hence  by the lower semicontinuity of $\varphi$ on $\mathcal{F}_e^*$, Fatou's Lemma and taking \eqref{app4}, \eqref{app3} into account, we get
\begin{eqnarray*}\label{regularity estimate}
&&\!\!\!\!\!\!\!\!\Bbb{E}\int_0^T\varphi(X_s)ds\nonumber\\
\leq&&\!\!\!\!\!\!\!\!\liminf_{n\rightarrow\infty}\Bbb{E}\int_0^T\varphi(X^{n}_s)ds\nonumber\\
\leq&&\!\!\!\!\!\!\!\!\liminf_{n\rightarrow\infty}\liminf_{\varepsilon\rightarrow0}\Bbb{E}\int_0^T\varphi^\varepsilon(X^{\varepsilon,n}_s)ds\nonumber\\
\leq&&\!\!\!\!\!\!\!\! C_T(\Bbb{E}\|x_0\|^2_{\mathcal{F}_e^*}+1)<\infty.
\end{eqnarray*}

\textbf{(SVI)}~~ Let $G$, $Z$ be as in Definition \ref{svi} (ii). Applying It\^{o}'s formula to $\|X^{\varepsilon,n}_t-Z_t\|^2_{\mathcal{F}_e^*}$, by \eqref{3.3.3}, $\forall t\in [0,T]$, we have
\begin{eqnarray*}
&&\!\!\!\!\!\!\!\!\Bbb{E}\|X^{\varepsilon,n}_t-Z_t\|^2_{\mathcal{F}_e^*}\nonumber\\
=&&\!\!\!\!\!\!\!\!\Bbb{E}\|x_0^n-Z_0\|^2_{\mathcal{F}_e^*}+2\Bbb{E}\int_0^t\langle\varepsilon LX^{\varepsilon,n}_s+L\beta^\varepsilon(X^{\varepsilon,n}_s)-G_s,X^{\varepsilon,n}_s-Z_s\rangle_{\mathcal{F}_e^*}ds\nonumber\\
&&\!\!\!\!\!\!\!\!+\Bbb{E}\int_0^t\|B(s,X^{\varepsilon,n}_s)-B(s,Z_s)\|^2_{L_2(L^2(\mu),\mathcal{F}_e^*)}ds.
\end{eqnarray*}
Using the convexity of $\psi^\varepsilon$, similarly as to get \eqref{con}, we have
\begin{eqnarray*}
&&\!\!\!\!\!\!\!\!\langle L\beta^\varepsilon(X^{\varepsilon,n}_s),X^{\varepsilon,n}_s-Z_s\rangle_{\mathcal{F}_e^*}\leq\varphi^\varepsilon(Z_s)-\varphi^\varepsilon(X^{\varepsilon,n}_s),~~ds\otimes\Bbb{P}-a.e..
\end{eqnarray*}
By \eqref{rrw3} with $p=2$, H\"{o}lder's inequality and Young's inequality,
\begin{eqnarray*}
&&\!\!\!\!\!\!\!\!\langle\varepsilon LX^{\varepsilon,n}_s,X^{\varepsilon,n}_s-Z_s\rangle_{\mathcal{F}_e^*}\nonumber\\
=&&\!\!\!\!\!\!\!\!-\varepsilon\int_EX^{\varepsilon,n}_s\cdot(X^{\varepsilon,n}_s-Z_s)d\mu\nonumber\\
\leq&&\!\!\!\!\!\!\!\!\varepsilon|X^{\varepsilon,n}_s|_{2}\cdot|X^{\varepsilon,n}_s-Z_s|_{2}\nonumber\\
\leq&&\!\!\!\!\!\!\!\!\frac{1}{2}\varepsilon^{\frac{4}{3}}|X^{\varepsilon,n}_s|_{2}^2+\frac{1}{2}\varepsilon^{\frac{2}{3}}|X^{\varepsilon,n}_s-Z_s|^2_{2},~~ds\otimes\Bbb{P}-a.e..
\end{eqnarray*}
So, by \eqref{fe1}, we have that $\forall t\in [0,T]$,
\begin{eqnarray}\label{lim}
&&\!\!\!\!\!\!\!\!2\Bbb{E}\int_0^t\varphi^\varepsilon(X^{\varepsilon,n}_s)ds\nonumber\\
\leq&&\!\!\!\!\!\!\!\!-\Bbb{E}\|X^{\varepsilon,n}_t-Z_t\|^2_{\mathcal{F}_e^*}+\Bbb{E}\|x_0^n-Z_0\|^2_{\mathcal{F}_e^*}+2\Bbb{E}\int_0^t\varphi^\varepsilon(Z_s)ds\nonumber\\
&&\!\!\!\!\!\!\!\!-2\Bbb{E}\int_0^t\langle G_s,X^{\varepsilon,n}_s-Z_s\rangle_{\mathcal{F}_e^*}ds+C_1\Bbb{E}\int_0^t\|X^{\varepsilon,n}_s-Z_s\|^2_{\mathcal{F}_e^*}ds\nonumber\\
&&\!\!\!\!\!\!\!\!+\Bbb{E}\int_0^t\big(\varepsilon^{\frac{4}{3}}|X^{\varepsilon,n}_s|_{2}^2+\varepsilon^{\frac{2}{3}}|X^{\varepsilon,n}_s-Z_s|^2_{2}\big)ds.
\end{eqnarray}
Since \eqref{yosida2} holds, for $v\in L^2(\mu)\cap \mathcal{F}_e^*$, we have $\varphi^\varepsilon(v)\leq \varphi(v)$. Taking $\liminf_{\varepsilon\rightarrow0}$ in both sides of \eqref{lim}, by \eqref{3.4}, we have that $\forall t\in [0,T]$,
\begin{eqnarray*}
&&\!\!\!\!\!\!\!\!2\liminf_{\varepsilon\rightarrow0}\Bbb{E}\int_0^t\varphi^\varepsilon(X^{\varepsilon,n}_s)ds\nonumber\\
\leq&&\!\!\!\!\!\!\!\!-\Bbb{E}\|X^{n}_t-Z_t\|^2_{\mathcal{F}_e^*}+\Bbb{E}\|x_0^n-Z_0\|^2_{\mathcal{F}_e^*}\nonumber\\
&&\!\!\!\!\!\!\!\!+2\Bbb{E}\int_0^t\varphi(Z_s)ds-2\Bbb{E}\int_0^t\langle G_s,X^{n}_s-Z_s\rangle_{\mathcal{F}_e^*}ds+C_1\Bbb{E}\int_0^t\|X^{n}_s-Z_s\|^2_{\mathcal{F}_e^*}ds.
\end{eqnarray*}
Taking \eqref{app4} into account, we observe that $\forall t\in [0,T]$,
\begin{eqnarray}\label{limm}
&&\!\!\!\!\!\!\!\!2\Bbb{E}\int_0^t\varphi(X^{n}_s)ds\nonumber\\
\leq&&\!\!\!\!\!\!\!\!-\Bbb{E}\|X^{n}_t-Z_t\|^2_{\mathcal{F}_e^*}+\Bbb{E}\|x_0^n-Z_0\|^2_{\mathcal{F}_e^*}\nonumber\\
&&\!\!\!\!\!\!\!\!+2\Bbb{E}\int_0^t\varphi(Z_s)ds-2\Bbb{E}\int_0^t\langle G_s,X^{n}_s-Z_s\rangle_{\mathcal{F}_e^*}ds+C_1\Bbb{E}\int_0^t\|X^{n}_s-Z_s\|^2_{\mathcal{F}_e^*}ds.
\end{eqnarray}
Then taking $n\rightarrow\infty$ in both sides of \eqref{limm}, by the lower semicontinuity of $\varphi$ on $\mathcal{F}_e^*$ (cf. Lemma \ref{cvx}) and Fatou's lemma, we have that $\forall t\in [0,T]$,
\begin{eqnarray*}\label{variational l.s.c.}
&&\!\!\!\!\!\!\!\!2\Bbb{E}\int_0^t\varphi(X_s)ds\nonumber\\
\leq&&\!\!\!\!\!\!\!\!2\Bbb{E}\int_0^t\liminf_{n\rightarrow\infty}\varphi(X^n_s)ds\nonumber\\
\leq&&\!\!\!\!\!\!\!\!2\liminf_{n\rightarrow\infty}\Bbb{E}\int_0^t\varphi(X^n_s)ds\nonumber\\
\leq&&\!\!\!\!\!\!\!\!-\Bbb{E}\|X_t-Z_t\|^2_{\mathcal{F}_e^*}+\Bbb{E}\|x_0-Z_0\|^2_{\mathcal{F}_e^*}\nonumber\\
&&\!\!\!\!\!\!\!\!+2\Bbb{E}\int_0^t\varphi(Z_s)ds-2\Bbb{E}\int_0^t\langle G_s,X_s-Z_s\rangle_{\mathcal{F}_e^*}ds+C_1\Bbb{E}\int_0^t\|X_s-Z_s\|^2_{\mathcal{F}_e^*}ds,
\end{eqnarray*}
which yields \eqref{svi3}.

\textbf{(Uniqueness)}~~Let $X$ be an SVI solution to \eqref{eq:1} and let $Y^{\varepsilon,n}_t:=Y^\varepsilon(t,y_0^n)$,
 $t\in[0,T]$, $\varepsilon\in(0,1)$, be strong solutions to \eqref{eq:3} with initial value $y_0^n\in L^2(\Omega,\mathcal{F}_0;L^2(\mu)\cap\mathcal{F}_e^*)$, where $\{y_0^n\}_{n\in\Bbb{N}}$ converges to $y_0$ in $L^2(\Omega,\mathcal{F}_0;\mathcal{F}_e^*)$, as $n\rightarrow\infty$.

From \eqref{3.1} we know that
$Y^{\varepsilon,n}\in L^2(\Omega;C([0,T];\mathcal{F}_e^*))\cap L^2([0,T]\times\Omega;L^2(\mu))$. By \eqref{rrw1}, the fact that $F_{1,2}\subset \mathcal{F}_e$ continuously and densely and by \eqref{3.4},
\begin{eqnarray*}
\Bbb{E}\int_0^T\|LY^{\varepsilon,n}_s\|^2_{\mathcal{F}_e^*}ds=\Bbb{E}\int_0^T\|Y^{\varepsilon,n}_s\|^2_{\mathcal{F}_e}ds\leq \Bbb{E}\int_0^T\|Y^{\varepsilon,n}_s\|^2_{F_{1,2}}ds<\infty.
\end{eqnarray*}
Furthermore, by \cite[page 35, Proposition 4.11]{MR},
\begin{eqnarray*}
\Bbb{E}\int_0^T\|L\beta^\varepsilon(Y^{\varepsilon,n}_s)\|^2_{\mathcal{F}_e^*}ds=\Bbb{E}\int_0^T\|\beta^\varepsilon(Y^{\varepsilon,n}_s)\|^2_{\mathcal{F}_e}ds<\infty,
\end{eqnarray*}
hence, $LY^{\varepsilon,n}, L\beta^\varepsilon(Y^{\varepsilon,n})\in L^2([0,T]\times\Omega;\mathcal{F}_e^*)$. Therefore, $Z:=Y^{\varepsilon,n}$, $G:=\varepsilon LY^{\varepsilon,n}+L\beta^\varepsilon(Y^{\varepsilon,n})$ are admissible choices for \eqref{svi2}.

Then \eqref{svi3} yields,
\begin{eqnarray*}
&&\!\!\!\!\!\!\!\!\Bbb{E}\|X_t-Y^{\varepsilon,n}_t\|^2_{\mathcal{F}_e^*}+2\Bbb{E}\int_0^t\varphi(X_s)ds\nonumber\\
\leq&&\!\!\!\!\!\!\!\!\Bbb{E}\|x_0-y_0^n\|^2_{\mathcal{F}_e^*}+2\Bbb{E}\int_0^t\varphi(Y^{\varepsilon,n}_s)ds\nonumber\\
&&\!\!\!\!\!\!\!\!-2\Bbb{E}\int_0^t\langle \varepsilon LY^{\varepsilon,n}_s+L\beta^\varepsilon(Y^{\varepsilon,n}_s),X_s-Y^{\varepsilon,n}_s\rangle_{\mathcal{F}_e^*}ds+C\Bbb{E}\int_0^t\|X_s-Y^{\varepsilon,n}_s\|^2_{\mathcal{F}_e^*}ds,\ \forall t\in[0,T].
\end{eqnarray*}
For $v\in L^{2}(\mu)\cap\mathcal{F}_e^*$, by \eqref{3.1}, \eqref{3.4} and \cite[page 35, Proposition 4.11]{MR}, \eqref{rrw3} with $p=2$ and the convexity of $\psi^\varepsilon$, we have
\begin{eqnarray*}
&&\!\!\!\!\!\!\!\!-\langle L\beta^\varepsilon(Y^{\varepsilon,n}_s), v-Y^{\varepsilon,n}_s\rangle_{\mathcal{F}_e^*}\nonumber\\
=&&\!\!\!\!\!\!\!\!\int_E\beta^\varepsilon(Y^{\varepsilon,n}_s)\cdot (v-Y^{\varepsilon,n}_s)d\mu\nonumber\\
=&&\!\!\!\!\!\!\!\!\int_E\partial\psi^\varepsilon(Y^{\varepsilon,n}_s)\cdot (v-Y^{\varepsilon,n}_s)d\mu\nonumber\\
\leq&&\!\!\!\!\!\!\!\!\int_E\psi^\varepsilon(v)-\psi^\varepsilon(Y^{\varepsilon,n}_s)d\mu\nonumber\\
=&&\!\!\!\!\!\!\!\!\varphi^\varepsilon(v)-\varphi^\varepsilon(Y^{\varepsilon,n}_s),~~~ds\otimes\Bbb{P}-a.e.,
\end{eqnarray*}
so,
\begin{eqnarray}\label{1}
-\langle L\beta^\varepsilon(Y^{\varepsilon,n}_s), v-Y^{\varepsilon,n}_s\rangle_{\mathcal{F}_e^*}+\varphi^\varepsilon(Y^{\varepsilon,n}_s)\leq\varphi^\varepsilon(v),~~~ds\otimes\Bbb{P}-a.e..
\end{eqnarray}
By \eqref{epsilon}, we have
\begin{eqnarray}\label{2}
\varphi(Y^{\varepsilon,n}_s)\leq \varepsilon c\big(|Y^{\varepsilon,n}_s|_{2}^{2}+\mu(E)\big)+\varphi^\varepsilon(Y^{\varepsilon,n}_s),~~~ds\otimes\Bbb{P}-a.e..
\end{eqnarray}
Hence, summing the left and right hand-sides of \eqref{2} and \eqref{1} respectively, by \eqref{yosida2}, we get
\begin{eqnarray}\label{eq:fe former}
&&\!\!\!\!\!\!\!\!-\langle L\beta^\varepsilon(Y^{\varepsilon,n}_s), v-Y^{\varepsilon,n}_s\rangle_{\mathcal{F}_e^*}+\varphi(Y^{\varepsilon,n}_s)\nonumber\\
\leq&&\!\!\!\!\!\!\!\!\varphi^\varepsilon(v)+\varepsilon c\big(|Y^{\varepsilon,n}_s|_{2}^{2}+\mu(E)\big)\nonumber\\
\leq&&\!\!\!\!\!\!\!\!\varphi(v)+\varepsilon c\big(|Y^{\varepsilon,n}_s|_{2}^{2}+\mu(E)\big),~~ds\otimes\Bbb{P}-a.e..
\end{eqnarray}

Let $\{\varepsilon_k\}_{k\in\Bbb{N}}$ be a subsequence of $(0,1)$ such that $\lim_{k\rightarrow\infty}\varepsilon_k=0$. Define $$A_1:=\{(s,\omega)|Y^{\varepsilon_k,n}_s(\omega)\in F_{1,2}\cap\mathcal{F}_e^*,~\forall \varepsilon_k,~k\in\Bbb{N},~\forall n\in\Bbb{N}\}.$$
Then $A_1^c$, which is the complement of $A_1$, is a measure zero subset of $[0,T]\times\Omega$. For simplicity, below we stick with the notation $\varepsilon$ for $\varepsilon_k$. Define
$$
A_2:=\{(s,\omega)|\varphi(X_s(\omega))<\infty\}.
$$
Then $A_2^c$ is $dt\otimes\Bbb{P}$ measure zero.

Assume that \eqref{uniqueness assumption} is satisfied. Let $(s,\omega)\in A_1\cap A_2$, then by Proposition \ref{construction of un} below, there exists a sequence $v_m(s,\omega)\in L^2(\mu)\cap\mathcal{F}_e^*$ such that $v_m(s,\omega)\rightharpoonup X_s(\omega)$ in $\mathcal{F}_e^*$ and $\varphi(X_s(\omega))=\lim_{m\rightarrow\infty}\varphi(v_m(s,\omega))$.
Hence, by \eqref{eq:fe former} we have
\begin{eqnarray*}\label{eq:fe}
&&\!\!\!\!\!\!\!\!-\langle L\beta^\varepsilon(Y^{\varepsilon,n}_s), X_s-Y^{\varepsilon,n}_s\rangle_{\mathcal{F}_e^*}+\varphi(Y^{\varepsilon,n}_s)\nonumber\\
\leq&&\!\!\!\!\!\!\!\!\varphi(X_s)+\varepsilon c\big(|Y^{\varepsilon,n}_s|_{2}^{2}+\mu(E)\big),~~ds\otimes\Bbb{P}-a.e.,
\end{eqnarray*}
and
\begin{eqnarray*}
&&\!\!\!\!\!\!\!\!\Bbb{E}\|X_t-Y^{\varepsilon,n}_t\|^2_{\mathcal{F}_e^*}+2\Bbb{E}\int_0^t\varphi(X_s)ds\nonumber\\
\leq&&\!\!\!\!\!\!\!\!\Bbb{E}\|x_0-y_0^n\|^2_{\mathcal{F}_e^*}+2\Bbb{E}\int_0^t\varphi(X_s)ds+2\Bbb{E}\int_0^t\varepsilon c\big(|Y^{\varepsilon,n}_s|_{2}^{2}+\mu(E)\big)ds\nonumber\\
&&\!\!\!\!\!\!\!\!-2\Bbb{E}\int_0^t\langle \varepsilon LY^{\varepsilon,n}_s,X_s-Y^{\varepsilon,n}_s\rangle_{\mathcal{F}_e^*}ds+C\Bbb{E}\int_0^t\|X_s-Y^{\varepsilon,n}_s\|^2_{\mathcal{F}_e^*}ds,\ \forall t\in[0,T].
\end{eqnarray*}
By \eqref{rrw1}, the fact that $F_{1,2}\subset \mathcal{F}_e$ continuously and densely, and by \eqref{3.4} and Young's inequality,
\begin{eqnarray*}
&&\!\!\!\!\!\!\!\!\langle \varepsilon LY^{\varepsilon,n}_s,X_s-Y^{\varepsilon,n}_s\rangle_{\mathcal{F}_e^*}\nonumber\\
\leq&&\!\!\!\!\!\!\!\!\varepsilon\|LY^{\varepsilon,n}_s\|_{\mathcal{F}_e^*}\cdot\|X_s-Y^{\varepsilon,n}_s\|_{\mathcal{F}_e^*}\nonumber\\
=&&\!\!\!\!\!\!\!\!\varepsilon\|Y^{\varepsilon,n}_s\|_{\mathcal{F}_e}\cdot\|X_s-Y^{\varepsilon,n}_s\|_{\mathcal{F}_e^*}\nonumber\\
\leq&&\!\!\!\!\!\!\!\!\varepsilon^{\frac{4}{3}}\|Y^{\varepsilon,n}_s\|^2_{F_{1,2}}+\varepsilon^{\frac{2}{3}}\|X_s-Y^{\varepsilon,n}_s\|^2_{\mathcal{F}_e^*},~~ds\otimes\Bbb{P}-a.e.,
\end{eqnarray*}
hence, $\forall t\in [0,T]$,
\begin{eqnarray*}
&&\!\!\!\!\!\!\!\!\Bbb{E}\|X_t-Y^{\varepsilon,n}_t\|^2_{\mathcal{F}_e^*}\nonumber\\
\leq&&\!\!\!\!\!\!\!\!\Bbb{E}\|x_0-y_0^n\|^2_{\mathcal{F}_e^*}+2\Bbb{E}\int_0^t\varepsilon C\big(|Y^{\varepsilon,n}_s|_{2}^{2}+\mu(E)\big)ds\nonumber\\
&&\!\!\!\!\!\!\!\!+\Bbb{E}\int_0^t\big(\varepsilon^{\frac{4}{3}}\|Y^{\varepsilon,n}_s\|^2_{F_{1,2}}+\varepsilon^{\frac{2}{3}}\|X_s-Y^{\varepsilon,n}_s\|^2_{\mathcal{F}_e^*}\big)ds+C\Bbb{E}\int_0^t\|X_s-Y^{\varepsilon,n}_s\|^2_{\mathcal{F}_e^*}ds.
\end{eqnarray*}
Taking $\varepsilon\rightarrow0$, then $n\rightarrow\infty$, we get that $\forall t\in [0,T]$,
\begin{eqnarray*}
&&\!\!\!\!\!\!\!\!\Bbb{E}\|X_t-Y_t\|^2_{\mathcal{F}_e^*}\leq\Bbb{E}\|x_0-y_0\|^2_{\mathcal{F}_e^*}+C\Bbb{E}\int_0^t\|X_s-Y_s\|^2_{\mathcal{F}_e^*}ds,
\end{eqnarray*}
where $Y$ is the SVI solution which has been constructed from $\{Y^{\varepsilon,n}\}$ in the limiting procedure. By Gronwall's inequality, one has
\begin{eqnarray*}
\Bbb{E}\|X_t-Y_t\|^2_{\mathcal{F}_e^*} \leq e^{Ct}\Bbb{E}\|x_0-y_0\|^2_{\mathcal{F}_e^*},~~\forall t\in[0,T].
\end{eqnarray*}
Clearly, the above inequality implies \eqref{th1}. This completes the proof of Theorem \ref{theorem}.
\end{proof}

\section{Applications}\label{applications}

In this section, we give some examples of transient Dirichlet forms on finite measure spaces with their associated semigroups satisfying \eqref{uniqueness assumption}.

Let $E:=\mathcal{O}$, which is a bounded domain of $\Bbb{R}^d$, $d\geq3$, with smooth boundary $\partial\mathcal{O}$. Let $d\mu:=dx$ be the Lebesgue measure. Let $H_0^1$ denote the Sobolev space of $L^2(\mathcal{O})$ functions whose first order weak derivatives exist and belong to $L^2(\mathcal{O})$, and have zero trace. The norm on $H_0^1$ is defined as $\|u\|_{H_0^1}=|\nabla u|_2$, $\forall u\in H_0^1$.

%

\begin{example}\label{example for laplace}
Consider the Dirichlet form $(\mathcal{E}, D(\mathcal{E}))$ on $L^2(\mathcal{O})$, which is generated by the $L:=$Laplacian with zero boundary condition on the boundary $\partial\mathcal{O}$, let $\{P_t\}_{t\geq0}$ be the corresponding semigroup, then $D(\mathcal{E})=H_0^1$ and
$$ \mathcal{E}(u,u)=|\nabla u|^2_2,\ \forall u\in H_0^1.$$
From \cite[page 279, Theorem 3]{Evans}, we know that
\begin{eqnarray}\label{poincare O}
|u|_{\frac{2d}{d-2}}\leq C|\nabla u|_2,\ \forall u\in H_0^1,
\end{eqnarray}
where $C$ depends on $d$ and $\mathcal{O}$. This implies $(\mathcal{E}, H_0^1)$ is transient. From \cite[page 233]{SSV}, \eqref{poincare O} is equivalent to
\begin{eqnarray*}\label{ultrobounded}
\|P_t\|_{1\rightarrow\infty}\leq ct^{-\frac{d}{2}},\ \forall t>0,
\end{eqnarray*}
which by Remark \ref{ultraboundeded} implies \eqref{uniqueness assumption}.
\end{example}

%

\begin{example}\label{Bernstein function proposition 1}
Let $(\mathcal{E}, D(\mathcal{E}))$ and $\{P_t\}_{t\geq0}$ be defined as in Example \ref{example for laplace}. Let
\begin{eqnarray*}\label{Bernstein function1}
f(\lambda)=\lambda^\alpha,\ 0<\alpha<1,
\end{eqnarray*}
which is a Bernstein function (see e.g. \cite[page 21, Definition 3.1]{SSV}). Let $\{P^f_t\}_{t\geq0}$ be the subordinate semigroup of $-f(-\Delta(\mathcal{O}))=-(-\Delta(\mathcal{O}))^\alpha$, where $-\Delta(\mathcal{O})$ denotes $-\Delta$ on $\mathcal{O}$ with zero boundary condition. Then
\begin{eqnarray}\label{Ef sobolev inequality}
\|P^f_t\|_{1\rightarrow2}< \infty,\ \forall t>0.
\end{eqnarray}
\end{example}

\begin{proof}
The Dirichlet form $(\mathcal{E}^f,D(\mathcal{E}^f))$ on $L^2(\mathcal{O})$ associated with $-f(-\Delta(\mathcal{O}))=-(-\Delta(\mathcal{O}))^\alpha$ is transient (see e.g. \cite[page 257]{SSV}). By \cite[page 75, Theorem 2.4.2]{Davis}, \eqref{poincare O} is equivalent to
\begin{eqnarray*}
\|P_t\|_{2\rightarrow\infty}\leq c{t^{-\frac{d}{4}}},\ \forall t>0.
\end{eqnarray*}
Note that

\begin{eqnarray*}
c{t^{-\frac{d}{4}}}\leq \exp\{c_{p,d}t^{-\frac{1}{p-1}}\},\ \forall t>0, p>1, d\geq3,
\end{eqnarray*}
where $c_{p,d}$ is a constant depends on $d$ and $p$. Let $p>\frac{1}{\alpha}>1$, then
\begin{eqnarray}\label{integrable}
\int_1^\infty \frac{1}{f(\lambda^p)}d\lambda=\int_1^\infty \frac{1}{\lambda^{p\alpha}}d\lambda<\infty.
\end{eqnarray}
Hence, by \cite[page 241, Corollary 13.45.(i)]{SSV} we know that $\{P^f_t\}_{t\geq0}$ is ultrabounded, i.e. $\|P^f_t\|_{2\rightarrow\infty}< \infty,\ \forall t>0$, since $\{P_t^f\}_{t\geq0}$ is symmetric, we then have \eqref{Ef sobolev inequality}, which corresponds to \eqref{uniqueness assumption}.

\end{proof}

%

\begin{example}\label{Bernstein function proposition 2}
Let $(\mathcal{E}, D(\mathcal{E}))$ and $\{P_t\}_{t\geq0}$  be defined as in Example \ref{example for laplace}. Let $f$ be a Bernstein function as following
\begin{eqnarray*}\label{Bernstein function2}
f(\lambda)=(\lambda+1)^\alpha-1,\ 0<\alpha<1.
\end{eqnarray*}
Let $\{P^f_t\}_{t\geq0}$ be the subordinate semigroup of $-f(-\Delta(\mathcal{O}))=-(-\Delta(\mathcal{O})+I(\mathcal{O}))^\alpha+I(\mathcal{O})$, where $I$ is the identity function. Then
\begin{eqnarray*}
\|P^f_t\|_{1\rightarrow2}< \infty,\ \forall t>0.
\end{eqnarray*}
\end{example}
\begin{proof}
The proof is the same as the proof of Example \ref{Bernstein function proposition 1}, if we can prove that \eqref{integrable} holds for $f(\lambda)=(\lambda+1)^\alpha-1$, $0<\alpha<1$. Let $p>\frac{1}{\alpha}>1$, then
\begin{eqnarray*}
\int_1^\infty \frac{1}{f(\lambda^p)}d\lambda=\int_1^\infty\frac{1}{(\lambda^p+1)^\alpha-1}d\lambda<\int_1^\infty \frac{1}{\lambda^{p\alpha}}d\lambda<\infty.
\end{eqnarray*}
\end{proof}

Let $\{\nu_t\}_{t>0}$ on $\Bbb{R}^d$ be a continuous symmetric convolution semigroup, its L\'{e}vy-Khinchin formula (\cite{I}) is as follows:
\begin{eqnarray*}
           \left\{
           \begin{array}{ll}
             &\!\!\!\!\!\!\!\!\hat{\nu}_t(x)\big(=\int_{\Bbb{R}^d}e^{i(x,y)}\nu_t(dy)\big)=e^{-t\phi(x)},\\
             &~\\
             &\!\!\!\!\!\!\!\!\psi(x)=\frac{1}{2}(Sx,x)+\int_{\Bbb{R}^d}(1-\cos(x,y))J(dy),
           \end{array}
         \right.
\end{eqnarray*}
where
\begin{eqnarray*}
&&S~\text{is~a~non-negative~definite}~d\times d~\text{symmetric~matrix},\label{con2}\\
&&J~\text{is~a~symmetric~measure~on}~\Bbb{R}^d\setminus\{0\}~\text{such~that}~\int_{\Bbb{R}^d\setminus\{0\}}\frac{|x|^2}{1+|x|^2}J(dx)<\infty.\label{con3}
\end{eqnarray*}
$\{\nu_t\}_{t>0}$ defines a Markovian transition function $p_t(x,A)$ on $(\Bbb{R}^d,\mathcal{B}(\Bbb{R}^d))$ by
\begin{eqnarray*}\label{transition}
p_t(x,A)=\nu_t(A-x).
\end{eqnarray*}
When $\nu_t(dy)$ is of the form $g_t(y)dy$ with $g_t\in L^2(\Bbb{R}^d)$, then
\begin{eqnarray*}
P_tu(x)=\int_{\Bbb{R}^d}u(x+y)g_t(y)dy=(g_t\ast u)(x),\ \ \forall u\in L^1(\Bbb{R}^d),
\end{eqnarray*}
which by Young's inequality (\cite[page 5, Lemma 1.4]{BCD}) implies
\begin{eqnarray*}
\|P_t\|_{1\rightarrow2}<\infty,\ \forall t>0.
\end{eqnarray*}
By \cite[page 31, Example 1.4.1]{FOT}, the regular Dirichlet form $\mathcal{E}$ on $L^2(\Bbb{R}^d)$ determined by the above $\{\nu_t\}_{t>0}$ is
\begin{eqnarray}\label{definition of DE}
           \left\{
           \begin{array}{ll}
             &\!\!\!\!\!\!\!\!\mathcal{E}(u,v)=\int_{\Bbb{R}^d}\hat{u}(x)\bar{\hat{v}}(x)\phi(x)dx,\\
             &~\\
             &\!\!\!\!\!\!\!\!D(\mathcal{E})=\{u\in L^2(\Bbb{R}^d):\int_{\Bbb{R}^d}|\hat{u}(x)|^2\phi(x)dx<\infty\}.
           \end{array}
         \right.
\end{eqnarray}
By \cite[page 48, Example 1.5.2]{FOT}, the extended Dirichlet space $(\mathcal{E},\mathcal{F}_e)$ of \eqref{definition of DE} is:
\begin{eqnarray*}\label{definition of Fe}
           \left\{
           \begin{array}{ll}
             &\!\!\!\!\!\!\!\!\mathcal{F}_e=\{u\in L^1_{loc}(\Bbb{R}^d):u\ \text{is\ a\ tempered\ distribution\ and}\ \hat{u}\in L^2(\phi\cdot dx)\},\\
             &~\\
             &\!\!\!\!\!\!\!\!\mathcal{E}(u,v)=\int_{\Bbb{R}^d}\hat{u}(x)\bar{\hat{v}}(x)\phi(x)dx, u,v\in \mathcal{F}_e.
           \end{array}
         \right.
\end{eqnarray*}
It is shown in \cite[page 48, Example 1.5.2]{FOT} that $(\mathcal{E},D(\mathcal{E}))$ is transient iff $\frac{1}{\phi}$ is locally integrable on $\Bbb{R}^d$. Typical examples (cf. \cite[Example 5.31]{BBKRSV} for more examples of $\phi$ and their applications) are:

%

\begin{example}\label{transient}
 (i)~~$\phi(x)=|x|^\alpha$, $(\mathcal{E},D(\mathcal{E}))$ is transient if $0<\alpha\leq2$, $\alpha<d$. The generator of $(\mathcal{E},D(\mathcal{E}))$ is the fractional Laplacian $\Delta^{\frac{\alpha}{2}}$. In this case, the extended Dirichelt space $(\mathcal{E},\mathcal{F}_e)$ can be identified with the Riesz potential space of index $\frac{\alpha}{2}$ (\cite[page 50]{FOT}). For the case $\alpha=2$, the associated symmetric Hunt processes (\cite[Chapter 4]{FOT}) is the $d$-dimensional Brownian motion with variance being equal to $2t$; \\  (ii)~~$\phi(x)=log(1+|x|^2)$, $(\mathcal{E},D(\mathcal{E}))$ is transient if $d>2$, it is related to the subordinate Brownian motion (\cite[page 110]{BBKRSV}) known as the variance gamma process, which has applications in finance (\cite{GMY});\\
 (iii)~~$\phi(x)=(|x|^2+m^{\frac{\alpha}{2}})^{\frac{2}{\alpha}}-m$, $m>0$, $(\mathcal{E},D(\mathcal{E}))$ is transient if $0<\alpha<2$,  $d>2$, it is related to the subordinate Brownian motion called the symmetric relativistic $\alpha$-stable process;\\
 (iv)~~$\phi(x)= \log((1+|x|^2)+\sqrt{(1+|x|^2)^2-1})$, $(\mathcal{E},D(\mathcal{E}))$ is transient if $d>1$, it is related to the subordinate Brownian motion with the Bessel subordinator.
 \end{example}

Now, let us construct transient Dirichelt forms on $G$, which is an open set of $\Bbb{R}^d$ with finite measure. Set $L^2_G(\Bbb{R}^d)=\{u\in L^2(\Bbb{R}^d):u=0\ \text{a.e.}\ \text{on}\ D^c\}$, which can be identified with $L^2(G)$. The Dirichlet form $\mathcal{E}$ on $G$ is denoted by $\mathcal{E}_G$ with $D(\mathcal{E}_G)=\{u\in D(\mathcal{E}): \tilde{u}=0\ \text{q.e.}\ \text{on}\ D^c\}$, where $\tilde{u}$ is a quasi continuous version of $u$. By \cite[page 174, Theorem 4.4.3]{FOT}, $(\mathcal{E}_G,D(\mathcal{E}_G))$ is a regular Dirichlet form on $L^2_G(\Bbb{R}^d)$. By \cite[page 175, Theorem 4.4.4]{FOT}, $(\mathcal{E}_G,D(\mathcal{E}_G))$ is transient if $(\mathcal{E},D(\mathcal{E}))$ is transient.

By \cite[page 173, Theorem 4.4.2]{FOT}, the strongly continuous semigroup $\{P_t^G\}_{t>0}$ on $L^2_G(\Bbb{R}^d)$ corresponding to $(\mathcal{E}_G,D(\mathcal{E}_G))$ is determined by the restriction of transient function $\{p_t\}_{t>0}$ to $(G,\mathcal{B}(G))$ defined by \cite[page 153, (4.1.2)]{FOT} for $X=G$.
It remains to prove that
\begin{eqnarray*}
\|P^G_t\|_{1\rightarrow2}<\infty,\ \forall t>0.
\end{eqnarray*}
Indeed, let $u\in L^1_G(\Bbb{R}^d):=\{u\in L^1(\Bbb{R}^d):u=0\ \text{a.e.}\ \text{on}\ G^c\}$ (which can be identified with $L^1(G)$), then
\begin{eqnarray*}
&&\!\!\!\!\!\!\!\!\Big(\int_{\Bbb{R}^d}|P_t^Gu(x)|^2dx\Big)^{\frac{1}{2}} \nonumber\\
\leq&&\!\!\!\!\!\!\!\!\Big(\int_{\Bbb{R}^d}\big(P_t^G|u|(x)\big)^2dx\Big)^{\frac{1}{2}} \nonumber\\
\leq&&\!\!\!\!\!\!\!\!\Big(\int_{\Bbb{R}^d}\big(P_t|u|(x)\big)^2dx\Big)^{\frac{1}{2}} \nonumber\\
\leq&&\!\!\!\!\!\!\!\!C\int_{\Bbb{R}^d}|u(x)|dx.
\end{eqnarray*}

\begin{remark}\label{example of uniqueness}
There are interesting examples satisfying \textbf{(T)} and \eqref{uniqueness assumption} on manifolds (see e.g. \cite[Section 4]{Varo}, \cite{AL}, \cite{VSC}) or graphs (see e.g. \cite[page:3, Example 1.5]{HKSW}).
\end{remark}

\section{Appendix}\label{Appendix}

\subsection{Moreau-Yosida approximation of maximal monotone operators}\label{Moreau Yosida}

\vspace{2mm}

Let $\psi:\Bbb{R}\rightarrow [0,\infty)$ be a convex, lower semicontinuous function with $\psi(0)=0$. Let $\beta (:=\partial\psi): \mathbb{R}\rightarrow\!2^\mathbb{R}$ be the subdifferential of $\psi$. We recall some properties of $\beta^\varepsilon$, which is the Yosida approximation of $\beta$, i.e.,
\begin{eqnarray*}\label{definition yosida}
\beta^\varepsilon(r)=\frac{1}{\varepsilon}(r-(1+\varepsilon\beta)^{-1}(r))\in\beta((1+\varepsilon\beta)^{-1}r),~~\forall r\in\Bbb{R},
\end{eqnarray*}
and $\psi^\varepsilon$, which is the Moreau regularization of $\psi$, i.e.,
\begin{eqnarray*}
\psi^\varepsilon(r):=\inf\Big\{\frac{|r-\overline{r}|^2}{2\varepsilon}+\psi(\overline{r}); \overline{r}\in\Bbb{R}\Big\}, ~~\forall r\in\Bbb{R}.
\end{eqnarray*}

\begin{lemma}\label{lemma3.1}
For $\varepsilon>0$, $\psi^\varepsilon$ is convex, continuous, Gateaux differentiable and $\beta^\varepsilon=\partial \psi^\varepsilon$. $\beta^\varepsilon$ is monotone, Lipschitz continuous with Lipschitz constant $\frac{1}{\varepsilon}$.
\begin{eqnarray}\label{yosida1}
|\beta^\varepsilon(r)|\leq|\beta(r)|,~~ \forall r\in \Bbb{R},
\end{eqnarray}
where
$|\beta(r)|:=\inf\{|\eta|:\eta\in \beta(r)\},\forall r\in\Bbb{R}$.
\begin{eqnarray}\label{yosida2}
\psi(J^\varepsilon(r))\leq\psi^\varepsilon(r)\leq\psi(r),~~ \forall r\in \Bbb{R},
\end{eqnarray}
where for every $r\in\Bbb{R}$, $s:=J^\varepsilon(r)$ is the solution to the equation
$$0\in s-r+\varepsilon\beta(s).$$
Furthermore,
\begin{eqnarray}\label{yosida3}
|\psi(r)-\psi^\varepsilon(r)|\leq\varepsilon|\beta(r)|^2,~~ \forall r\in \Bbb{R}.
\end{eqnarray}
For all $r,r'\in\Bbb{R}$, $\varepsilon_1, \varepsilon_2\in(0,\infty)$,
\begin{eqnarray}\label{yosida4}
\big(\beta^{\varepsilon_1}(r)-\beta^{\varepsilon_2}(r')\big)(r-r')\geq -\frac{1}{2}(\varepsilon_1+\varepsilon_2)\big(|\beta^{\varepsilon_1}(r)|^2+|\beta^{\varepsilon_2}(r')|^2\big),
\end{eqnarray}
and if \eqref{beta} is satisfied, then
\begin{eqnarray}\label{beta3}
\big(\beta^{\varepsilon_1}(r)-\beta^{\varepsilon_2}(r')\big)(r-r')\geq-C(\varepsilon_1+\varepsilon_2)\big(|r|^2+|r'|^2+1\big).
\end{eqnarray}
\end{lemma}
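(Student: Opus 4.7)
The plan is to handle the Moreau–Yosida statements (1)–(5) by direct appeal to standard references (the Barbu monograph already cited in the paper), and then to prove the two quadratic inequalities \eqref{yosida4} and \eqref{beta3} by a short explicit computation based on the resolvent identity. Nothing here requires new ideas: the only real content is the clean way to bound the cross term coming from the different Yosida parameters $\varepsilon_1,\varepsilon_2$.

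First, the facts that $\psi^\varepsilon$ is convex, continuous, Gateaux differentiable with $\beta^\varepsilon=\partial\psi^\varepsilon$, that $\beta^\varepsilon$ is monotone and $\tfrac{1}{\varepsilon}$-Lipschitz, and the bounds \eqref{yosida1}, \eqref{yosida2}, \eqref{yosida3} are classical; I would simply cite the corresponding results in Barbu (Theorem 2.9 / Proposition 2.11 and neighboring statements) without repeating the proofs, observing only that \eqref{yosida3} follows from \eqref{yosida2} together with the subgradient inequality $\psi(r) - \psi(J^\varepsilon r) \leq |\beta(r)|\,|r - J^\varepsilon r| = \varepsilon |\beta(r)|\,|\beta^\varepsilon(r)| \leq \varepsilon|\beta(r)|^2$.

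For \eqref{yosida4} I would set $a:=\beta^{\varepsilon_1}(r)$ and $b:=\beta^{\varepsilon_2}(r')$ and use the resolvent identity
\[
r = J^{\varepsilon_1}(r) + \varepsilon_1 a, \qquad r' = J^{\varepsilon_2}(r') + \varepsilon_2 b,
\]
together with $a\in\beta(J^{\varepsilon_1}(r))$, $b\in\beta(J^{\varepsilon_2}(r'))$. Then
\[
(a-b)(r-r') = (a-b)\bigl(J^{\varepsilon_1}(r)-J^{\varepsilon_2}(r')\bigr) + (a-b)(\varepsilon_1 a - \varepsilon_2 b).
\]
The first summand is $\geq 0$ by monotonicity of $\beta$. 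Expanding the second gives $\varepsilon_1 a^2 + \varepsilon_2 b^2 - (\varepsilon_1+\varepsilon_2)ab$, and Young's inequality $|ab|\leq \tfrac12(a^2+b^2)$ bounds this below by $-\tfrac12(\varepsilon_1+\varepsilon_2)(a^2+b^2)$, which is exactly \eqref{yosida4}. Finally \eqref{beta3} is obtained by combining \eqref{yosida4} with \eqref{yosida1} and the growth condition \textbf{(H3)}: $|\beta^{\varepsilon_i}(\,\cdot\,)|^2 \leq |\beta(\,\cdot\,)|^2 \leq 2c^2(|\cdot|^2+1)$, and absorbing constants. I expect no real obstacle; the only subtle point is recognizing that the resolvent decomposition of $r-r'$ is what makes the monotonicity of $\beta$ (rather than of $\beta^\varepsilon$) available, which is essential because otherwise one cannot get the sharp lower bound involving only $\varepsilon_1+\varepsilon_2$.
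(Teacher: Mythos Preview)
Your proposal is correct and matches the paper's approach: the paper simply cites Barbu for the first two statements and \eqref{yosida2}, cites the Neu{\ss} thesis for \eqref{yosida1}, \eqref{yosida3}, \eqref{yosida4}, and derives \eqref{beta3} from \eqref{yosida4} together with \eqref{yosida1} and \textbf{(H3)}, exactly as you do. The only difference is that you supply short explicit arguments for \eqref{yosida3} and \eqref{yosida4} (the resolvent decomposition plus monotonicity of $\beta$ and Young's inequality) where the paper just points to a reference; your computations are correct and make the appendix more self-contained.
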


\begin{proof}
The first statement and \eqref{yosida2} can be found in \cite[page 48, Theorem 2.9]{Barbu}. The second statement can be found in \cite[page 38, Proposition 2.2]{Barbu} and \cite[page 41, Proposition 2.3 (ii)]{Barbu}. \eqref{yosida1}, \eqref{yosida3} and \eqref{yosida4} can be found in \cite[page 98, Lemma 3.D.3]{Marus}, \cite[page 98, Lemma 3.D.6.]{Marus} and \cite[page 99, Lemma 3.D.8.]{Marus}, respectively. By \eqref{yosida4} and \eqref{beta} we can get \eqref{beta3}.

\end{proof}

\subsection{Lower semicontinuity of the energy potential funtional on $\mathcal{F}_e^*$}\label{Lower semicontinuity of varphi}

\vspace{2mm}
Since the measure is finite, it follows from \cite[page 425, Theorem A.4.1 (i)]{FOT} that, $F_{1,2}\cap L^\infty(\mu)$ is dense in $(F_{1,2},\|\cdot\|_{F_{1,2}})$. Since $(\mathcal{E},D(\mathcal{E}))$ is a transient Dirichlet space, it follows that $F_{1,2}\cap L^\infty(\mu)$ is dense in $(\mathcal{F}_e,\|\cdot\|_{\mathcal{F}_e})$.

Define $L^1(\mu)\cap\mathcal{F}_e^*$ in the following sense:
\begin{eqnarray*}
L^1(\mu)\cap\mathcal{F}_e^*:=\{v\in L^1(\mu)|\exists C\in(0,\infty)~\text{such~that}~\mu(uv)\leq C\|u\|_{\mathcal{F}_e},~\forall u\in F_{1,2}\cap L^\infty(\mu)\}.
\end{eqnarray*}
For each $v\in L^1(\mu)\cap\mathcal{F}_e^*$, one can define a map
\begin{eqnarray*}
\overline{v}(u):=\mu(uv),\ \forall u\in F_{1,2}\cap L^\infty(\mu),
\end{eqnarray*}
by continuity, it can be uniquely extended to a bounded linear functional on $\mathcal{F}_e$. Then $L^1(\mu)\cap\mathcal{F}_e^*$ can be considered as a subset of $\mathcal{F}_e^*$ by identifying $v$ with $\overline{v}$, and we have
\begin{eqnarray}\label{FeL1}
_{\mathcal{F}_e^*}\langle v,u\rangle_{\mathcal{F}_e}=\mu(uv),\ \forall u\in F_{1,2}\cap L^\infty(\mu), v\in L^1(\mu)\cap\mathcal{F}_e^*.
\end{eqnarray}
Similar to Lemma \ref{RRW} (ii) (cf. \cite[page 134]{RRW} for its proof ), we also have
\begin{eqnarray*}\label{FeL2}
\langle \overline{L}u, \overline{v}\rangle_{\mathcal{F}_e^*}=-\mu(uv),\ \forall u\in F_{1,2}\cap L^\infty(\mu), v\in L^1(\mu)\cap\mathcal{F}_e^*.
\end{eqnarray*}

Recall from \eqref{convex} that $\varphi:\mathcal{F}_e^*\rightarrow [0,\infty]$ is defined as following
\begin{eqnarray*}
\varphi(v):=\left\{
           \begin{array}{ll}
             &\!\!\!\!\!\!\!\! \int_E \psi(v(x))\mu(dx),~~~~~ v\in L^1(\mu)\cap\mathcal{F}_e^*,\\
             &\!\!\!\!\!\!\!\!+\infty,~~~~~~~~~~~~~~~~~ ~~~~\text{else}.
           \end{array}
         \right.
\end{eqnarray*}
\begin{proposition}\label{cvx}
Suppose that \textbf{(H1)} and \textbf{(H2)} are satisfied. Then $\varphi$ is a proper, convex, lower-semicontinuous function on $\mathcal{F}_e^*$.
\end{proposition}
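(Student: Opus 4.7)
\medskip

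\noindent\textbf{Proof proposal for Proposition \ref{cvx}.}

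Properness and convexity are the easy parts, and I would dispatch them first. For properness, the constant function $0$ lies in $L^1(\mu)\cap\mathcal{F}_e^*$ and $\psi(0)=0$ by \textbf{(H1)}, so $\varphi(0)=0<\infty$. For convexity, note that $L^1(\mu)\cap\mathcal{F}_e^*$ is a linear subspace of $\mathcal{F}_e^*$ (from the definition via the pairing $\mu(u\cdot)$), and for two elements of this subspace the inequality $\varphi(\lambda v_1+(1-\lambda)v_2)\leq\lambda\varphi(v_1)+(1-\lambda)\varphi(v_2)$ follows pointwise from convexity of $\psi$; if either argument lies outside the subspace, the right-hand side is $+\infty$ and there is nothing to prove.

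The real content is lower-semicontinuity. The plan is: take a sequence $v_n\to v$ in $\mathcal{F}_e^*$ with $M:=\liminf_n\varphi(v_n)<\infty$ (otherwise the inequality is trivial), pass to a subsequence along which $\varphi(v_n)\to M$ and $v_n\in L^1(\mu)\cap\mathcal{F}_e^*$ with $\sup_n\int_E\psi(v_n)\,d\mu<\infty$. Using \textbf{(H2)}, for every $K>0$ there exists $R_K$ with $\psi(r)\geq K|r|$ for $|r|\geq R_K$, so for any measurable $A\subset E$,
\begin{equation*}
\int_A|v_n|\,d\mu\leq R_K\,\mu(A)+\tfrac{1}{K}\int_E\psi(v_n)\,d\mu.
\end{equation*}
Taking $A=E$ and using $\mu(E)<\infty$ yields $\sup_n|v_n|_1<\infty$; letting $\mu(A)\to 0$ after choosing $K$ large yields uniform integrability. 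By the Dunford--Pettis theorem, a further subsequence (still denoted $v_n$) converges weakly in $L^1(\mu)$ to some $w\in L^1(\mu)$.

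The key step, and the one requiring real care, is to identify the weak $L^1$-limit $w$ with $v$ as an element of $\mathcal{F}_e^*$, and to then obtain the lower-semicontinuity estimate. For any test function $u\in F_{1,2}\cap L^\infty(\mu)$, on the one hand $\mu(uv_n)\to\mu(uw)$ by weak $L^1$ convergence paired against $u\in L^\infty(\mu)$; on the other hand, by \eqref{FeL1},
\begin{equation*}
\mu(uv_n)={}_{\mathcal{F}_e^*}\langle v_n,u\rangle_{\mathcal{F}_e}\longrightarrow{}_{\mathcal{F}_e^*}\langle v,u\rangle_{\mathcal{F}_e}
\end{equation*}
since $v_n\to v$ strongly in $\mathcal{F}_e^*$. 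Hence ${}_{\mathcal{F}_e^*}\langle v,u\rangle_{\mathcal{F}_e}=\mu(uw)$ for all $u\in F_{1,2}\cap L^\infty(\mu)$, and the bound $|\mu(uw)|\leq\|v\|_{\mathcal{F}_e^*}\|u\|_{\mathcal{F}_e}$ together with density of $F_{1,2}\cap L^\infty(\mu)$ in $\mathcal{F}_e$ shows that $w\in L^1(\mu)\cap\mathcal{F}_e^*$ and represents $v$ via the identification in the definition. Finally, the convex integral functional $f\mapsto\int_E\psi(f)\,d\mu$ is strongly lower-semicontinuous on $L^1(\mu)$ by Fatou's lemma (extract an a.e.\ convergent subsequence) applied to the nonnegative lower-semicontinuous $\psi$; being convex and strongly lsc on a Banach space, it is also weakly lsc by Mazur's lemma. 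Therefore
\begin{equation*}
\varphi(v)=\int_E\psi(w)\,d\mu\leq\liminf_n\int_E\psi(v_n)\,d\mu=M,
\end{equation*}
which is the desired inequality. The main obstacle is precisely the identification of $w$ with $v$: one cannot compare them directly since $L^1(\mu)$ and $\mathcal{F}_e^*$ are a priori unrelated topologies on the abstract $L^1$-function, and the bridging is achieved only through the duality pairing on the dense test class $F_{1,2}\cap L^\infty(\mu)$.
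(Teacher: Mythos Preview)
Your proof is correct and follows essentially the same approach as the paper: properness and convexity are dispatched trivially, and lower-semicontinuity is obtained by using \textbf{(H2)} together with Dunford--Pettis to extract a weak $L^1(\mu)$-limit, identifying it with $v$ through the duality pairing on the dense test class $F_{1,2}\cap L^\infty(\mu)$, and invoking weak lower-semicontinuity of the convex integral functional (Mazur / convex strong lsc $\Rightarrow$ weak lsc). The only cosmetic differences are that the paper phrases the argument via closedness of sublevel sets and factors the proof through an auxiliary functional $\widetilde{\varphi}$ on $L^1(\mu)$, but the substance is the same.
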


\begin{proof}
We first claim that $\widetilde{\varphi}:L^1(\mu)\rightarrow [0,\infty]$, which is defined as following
\begin{eqnarray*}
\widetilde{\varphi}(v):=\left\{
           \begin{array}{ll}
             &\!\!\!\!\!\!\!\! \int_E \psi(v(x))\mu(dx),~~~~~ v\in L^1(\mu)\cap\mathcal{F}_e^*,\\
             &\!\!\!\!\!\!\!\!+\infty,~~~~~~~~~~~~~~~~~ ~~~~\text{else},
           \end{array}
         \right.
\end{eqnarray*}
is a proper, convex, lower-semicontinuous function on $L^1(\mu)$.

Note that $\psi(0)=0$, hence $\widetilde{\varphi}$ is a proper function on $L^1(\mu)$ and $\varphi$ is a proper function on $\mathcal{F}_e^*$. Since $\psi:\Bbb{R}\rightarrow[0,\infty)$ is convex, it implies that $\widetilde{\varphi}$ is a convex function on $L^1(\mu)$ and $\varphi$ is a convex function on $\mathcal{F}_e^*$.

To prove $\widetilde{\varphi}$ is lower-semicontinuous on $L^1(\mu)$, we have to prove that for every $\xi\in\Bbb{R}$, the level subset $\{v\in L^1(\mu);\widetilde{\varphi}(v)\leq\xi\}$ is closed in $L^1(\mu)$. Consider a sequence $\{v^n\}_{n\in\Bbb{N}}\subset L^1(\mu)$ such that $\widetilde{\varphi}(v^n)\leq\xi$ (which implies that $\{v^n\}_{n\in\Bbb{N}}\subset L^1(\mu)\cap\mathcal{F}_e^*$), and with an element $\widetilde{v}\in L^1(\mu)$ such that $v^n\rightarrow \widetilde{v}$ in $L^1(\mu)$. We must prove that $\widetilde{\varphi}(\widetilde{v})\leq\xi$. Since $v^n\rightarrow \widetilde{v}$ in $L^1(\mu)$, there exists a subsequence $\{v^{n_k}\}_{k\in\Bbb{N}}$ such that $v^{n_k}\rightarrow \widetilde{v}$, a.s.. Since $\psi$ is lower-semicontinuous,
$$\psi(\widetilde{v}(x))\leq\liminf_{k\rightarrow\infty}\psi(v^{n_k}(x)),\ a.s..$$
By integrating the above inequality over $E$ with respect to $\mu$, and by Fatou's lemma, we obtain that
\begin{eqnarray*}
\int_E\psi(\widetilde{v}(x))\mu(dx)&&\!\!\!\!\!\!\!\!\leq \int_E\liminf_{k\rightarrow\infty}\psi (v^{n_k}(x))\mu(dx)\nonumber\\
&&\!\!\!\!\!\!\!\!\leq\liminf_{k\rightarrow\infty}\int_E\psi (v^{n_k}(x))\mu(dx)\nonumber\\
&&\!\!\!\!\!\!\!\!\leq\xi.
\end{eqnarray*}

To prove $\varphi$ is lower-semicontinuous on $\mathcal{F}_e^*$, we have to prove that for every $\xi\in\Bbb{R}$, the level subset $\{v\in \mathcal{F}_e^*;\varphi(v)\leq\xi\}$ is closed in $\mathcal{F}_e^*$. Consider a sequence $\{v_n\}_{n\in\Bbb{N}}\subset \mathcal{F}_e^*$ such that $\varphi(v_n)\leq\xi$ (which implies that $\{v_n\}_{n\in\Bbb{N}}\subset L^1(\mu)\cap\mathcal{F}_e^*$), and with an element $v\in \mathcal{F}_e^*$ such that $v_n\rightarrow v$ in $\mathcal{F}_e^*$. We must prove that $\varphi(v)\leq\xi$. But since $\widetilde{\varphi}$ is convex and lower-semicontinuous on $L^1(\mu)$, it suffices to show that $\{v_n\}_{n\in\Bbb{N}}$ is weakly sequentially compact in $L^1(\mu)$ (\cite[page 80, Theorem 2.3]{Mali}). Since the measure is finite, according to Dunford-Pettis (\cite[page 175, Theorem 2.54]{FL}), we need to prove the following two things:\\
(i)~$\{v_n\}_{n\in\Bbb{N}}$ is bounded in $L^1(\mu)$;\\
(ii)~for every $\varepsilon>0$, there exists $\delta>0$ such that
$$\int_{E'}|v_n(x)|\mu(dx)\leq\varepsilon$$
for every measurable set $E'\subset E$ with $\mu(E')\leq\delta$.\\

By \eqref{psi}, we can find a $C>0$ such that $\frac{\psi(r)}{|r|}\geq 1$ if $|r|\geq C$. Then,
\begin{eqnarray*}
&&\!\!\!\!\!\!\!\!\int_E|v_n(x)|\mu(dx)\nonumber\\
=&&\!\!\!\!\!\!\!\!\int_{\{|v_n|\geq C\}}|v_n(x)|\mu(dx)+\int_{\{|v_n|<C)\}}|v_n(x)|\mu(dx)\nonumber\\
\leq&&\!\!\!\!\!\!\!\!\int_{\{|v_n|\geq C\}}\psi(v_n(x))\mu(dx)+\int_{\{|v_n|<C\}}C\mu(dx)\nonumber\\
\leq&&\!\!\!\!\!\!\!\!\xi+C\mu(E),
\end{eqnarray*}
which implies (i).

To prove (ii), by the proof of \cite[page 267, 4.5.3. Proposition]{Bogachev}, we only need to show that $\{v_n\}_{n\in\Bbb{N}}$ is uniformly integrable (\cite[page 267, 4.5.1. Definition]{Bogachev}), which is true according to the proof of \cite[page 272, 4.5.9. Theorem]{Bogachev} since \eqref{psi} and $\varphi(v_n)\leq\xi$ are fulfilled.

Therefore, there exists a subsequence $\{v_{n_k}\}_{k\in\Bbb{N}}$ and $v'\in L^1(\mu)$ such that $v_{n_k}\rightharpoonup v'$ in $L^1(\mu)$ as $k\rightarrow\infty$. So, for all $u\in F_{1,2}\cap L^\infty(\mu)$, by \eqref{FeL1} applied to $v_{n_k}$ replacing $v$, we have
\begin{eqnarray*}
_{\mathcal{F}_e^*}\langle v,u\rangle_{\mathcal{F}_e}=\lim_{k\rightarrow\infty}~_{\mathcal{F}_e^*}\langle v_{n_k},u\rangle_{\mathcal{F}_e}=\lim_{k\rightarrow\infty}\int_E v_{n_k}ud\mu=\int_E v'ud\mu,
\end{eqnarray*}
which implies that $v'\in L^1(\mu)\cap\mathcal{F}_e^*$ and that $v=v'$ because $F_{1,2}\cap L^\infty(\mu)$ is dense in $\mathcal{F}_e$. Consequently,
$$\varphi(v)=\widetilde{\varphi}(v)\leq\liminf_{k\rightarrow\infty}\widetilde{\varphi}(v_n)\leq\xi.$$
\end{proof}

\begin{proposition}\label{construction of un}
Suppose that \textbf{(H1)}, \textbf{(H2)} and \eqref{uniqueness assumption} are satisfied. Then, for any $v$ satisfying $\varphi(v)<\infty$, there exists a sequence $\{v_n\}_{n\in\Bbb{N}}$ with $v_n\in L^2(\mu)\cap\mathcal{F}_e^*$ such that $v_n\rightharpoonup v$ in $\mathcal{F}_e^*$ and $\varphi(v)=\lim_{n\rightarrow\infty}\varphi(v_n)$.
\end{proposition}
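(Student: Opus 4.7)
The plan is to take $v_n := P_{t_n} v$ for a sequence $t_n \downarrow 0$ and verify the three required properties: (a) $v_n \in L^2(\mu)\cap\mathcal{F}_e^*$; (b) $v_n \to v$ strongly (hence weakly) in $\mathcal{F}_e^*$; and (c) $\varphi(v_n) \to \varphi(v)$. Note that the hypothesis $\varphi(v) < \infty$ forces $v \in L^1(\mu) \cap \mathcal{F}_e^*$ and $\psi\circ v \in L^1(\mu)$, which will make all subsequent estimates meaningful.

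For (a), the assumption \eqref{uniqueness assumption} $\|P_t\|_{1\to 2}<\infty$ together with $v\in L^1(\mu)$ immediately gives $P_t v\in L^2(\mu)$. To check that $P_t v \in L^2(\mu) \cap \mathcal{F}_e^*$ in the sense of \eqref{Fe*Lp} with $p=2$, I would use the symmetry of $P_t$ together with the fact that $P_t$ extends to a contraction on $\mathcal{F}_e$ (being generated by the Dirichlet form) and dually to a contraction on $\mathcal{F}_e^*$: for every $u\in \mathcal{F}_e \cap L^2(\mu)$,
\begin{equation*}
\mu(u \cdot P_t v) = {}_{\mathcal{F}_e^*}\langle v, P_t u\rangle_{\mathcal{F}_e} \leq \|v\|_{\mathcal{F}_e^*}\|P_t u\|_{\mathcal{F}_e} \leq \|v\|_{\mathcal{F}_e^*}\|u\|_{\mathcal{F}_e}.
\end{equation*}
For (b), I would invoke strong continuity of $\{P_t\}_{t>0}$ on $\mathcal{F}_e^*$, which follows from strong continuity on $L^2(\mu)$ together with the contractivity on $\mathcal{F}_e$ and a density argument using $F_{1,2}\subset \mathcal{F}_e$.

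For (c), both the $\limsup$ and $\liminf$ inequalities are needed. The lower bound $\varphi(v)\leq \liminf_n \varphi(v_n)$ is a direct consequence of the lower-semicontinuity of $\varphi$ on $\mathcal{F}_e^*$ established in Proposition \ref{cvx} combined with (b). The upper bound $\limsup_n \varphi(v_n) \leq \varphi(v)$ is the Jensen-type step: since $\{P_t\}_{t>0}$ is a symmetric sub-Markovian semigroup, it can be represented by a sub-probability kernel $p_t(x,dy)$; the defect $1-p_t(x,E)$ can be placed at an auxiliary cemetery point where $v$ is extended by zero, and since $\psi(0)=0$, Jensen's inequality applied to the convex function $\psi$ yields the pointwise bound $\psi(P_t v)(x) \leq P_t(\psi\circ v)(x)$. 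Integrating against $\mu$ and using that $P_t$ is a contraction on $L^1(\mu)$ (by symmetry and the sub-Markov property) gives
\begin{equation*}
\varphi(P_t v) = \int_E \psi(P_t v)\,d\mu \leq \int_E P_t(\psi\circ v)\,d\mu \leq \int_E \psi(v)\,d\mu = \varphi(v).
\end{equation*}

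I expect the main obstacle to be the rigorous justification of the Jensen-type pointwise inequality $\psi \circ P_t v \leq P_t(\psi \circ v)$: while conceptually clear through the kernel representation, one must carefully invoke the existence of such a kernel for the abstract symmetric sub-Markovian $L^2$-semigroup (a standard but non-trivial fact) and handle the sub-Markovian defect through the $\psi(0)=0$ hypothesis. The other potentially delicate point is ensuring that the $\mathcal{F}_e^*$-extension of $P_t$ really coincides on $L^1(\mu) \cap \mathcal{F}_e^*$ with the $L^2$-semigroup of $P_t v$ constructed via \eqref{uniqueness assumption}, which requires consistency across the embeddings $F_{1,2} \subset \mathcal{F}_e$ and $F_{1,2} \subset L^2(\mu)$.
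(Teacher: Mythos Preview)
Your approach is essentially the paper's: both take $v_n:=P_{1/n}v$ and verify the three properties along the same lines (ultraboundedness gives $v_n\in L^2(\mu)$; lower semicontinuity of $\varphi$ gives the $\liminf$ bound; a Jensen-type estimate $\psi(P_tv)\le P_t(\psi\circ v)$ plus $L^1$-contractivity gives the $\limsup$ bound). Two technical differences are worth pointing out.

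First, the paper opens by deriving a Poincar\'e inequality $\mu(u^2)\le c\,\mathcal{E}(u,u)$ from \eqref{uniqueness assumption} combined with transience (via super-Poincar\'e inequalities, \cite[Theorems 3.2.1 and 3.3.15]{Wang}). This forces $F_{1,2}=\mathcal{F}_e$, hence $L^2(\mu)\cap\mathcal{F}_e^*=L^2(\mu)$, which dissolves at one stroke the consistency issue you flag at the end. With this identification the paper only proves \emph{weak} convergence $P_{1/n}v\rightharpoonup v$ in $\mathcal{F}_e^*=F_{1,2}^*$ (enough since convex $+$ l.s.c.\ implies weakly l.s.c.), rather than the strong convergence you aim for.

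Second, for the Jensen step the paper avoids kernel representation entirely: it writes $\psi(x)=\sup\{ax+b:az+b\le\psi(z)\ \forall z\}$, notes that $\psi(0)=0$ forces $b\le 0$, and uses sub-Markovianity via $b\le P_tb$ for constants $b\le 0$ to get $aP_tv+b\le P_t(av+b)\le P_t(\psi\circ v)$, then takes the supremum. This is more self-contained in the abstract measure-space setting than invoking a kernel, and directly handles the sub-Markovian defect that you address through the cemetery-point device.
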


\begin{proof}
Since \eqref{uniqueness assumption} is satisfied, by \cite[page 156, Theorem 3.3.15]{Wang}, we know that the following inequality holds:
\begin{eqnarray*}
\mu(u^2)\leq r\mathcal{E}(u,u)+b(r)\mu(|u|)^2,\ \forall r>0, u\in F_{1,2},
\end{eqnarray*}
where
\begin{eqnarray*}
b(r)=\inf_{s\leq r,t>0}\frac{s\|P_t\|_{1\rightarrow\infty}}{t}\exp\{t/s-1\}.
\end{eqnarray*}
Since $(\mathcal{E},F_{1,2})$ is transient, there exists a bounded $\mu$-integrable function $g$ strictly positive $\mu$-a.e. such that
\begin{eqnarray}\label{Ef transient}
\int_E|u|gd\mu\leq \sqrt{\mathcal{E}(u,u)}, \ \forall u\in F_{1,2}.
\end{eqnarray}
From \cite[page 142, Theorem 3.2.1]{Wang}, we also know that for the above $g$, there exists $\widetilde{b}:(0,\infty)\rightarrow(0,\infty)$ such that
\begin{eqnarray}\label{Ef g}
\mu(u^2)\leq r \mathcal{E}(u,u)+\widetilde{b}(r)\Big(\int_E|u|gd\mu\Big)^2,\ \forall r>0, u\in F_{1,2}.
\end{eqnarray}
Taking \eqref{Ef transient} into \eqref{Ef g}, we have
\begin{eqnarray*}
\mu(u^2)\leq r \mathcal{E}(u,u)+\widetilde{b}(r)\mathcal{E}(u,u),\ \forall r>0, u\in F_{1,2}.
\end{eqnarray*}
Therefore, the Poincar\'{e} inequality holds, i.e. there exists a constant $c>0$ such that
\begin{eqnarray*}
\mu(u^2)\leq c\mathcal{E}(u,u),\ \forall u\in F_{1,2}.
\end{eqnarray*}
Since $\mathcal{F}_e$ is the completion of $F_{1,2}$ with respect to the norm $\|\cdot\|_{\mathcal{F}_e}=\mathcal{E}(\cdot,\cdot)^{\frac{1}{2}}$, it follows that $F_{1,2}=\mathcal{F}_e$, $F^*_{1,2}=\mathcal{F}^*_e$ and $L^2(\mu)\cap \mathcal{F}_e^*=L^2(\mu)\cap F^*_{1,2}=L^2(\mu)$.

From \cite[page 32, Lemma 1.3.3]{FOT}, we know that $\forall u\in F_{1,2}$, one has $P_tu\in F_{1,2}$ and
\begin{eqnarray}\label{Ptu}
\lim_{t\rightarrow0}P_tu=u.
\end{eqnarray}
Define $P'_t:F^*_{1,2}\rightarrow F^*_{1,2}$ by
\begin{eqnarray}\label{definition of P't}
_{F^*_{1,2}}\langle P'_tv,u\rangle_{F_{1,2}}:=~_{F^*_{1,2}}\langle v,P_tu\rangle_{F_{1,2}},\ \forall u\in F_{1,2}, v\in F^*_{1,2}.
\end{eqnarray}
Then, it follows from \eqref{Ptu} that
\begin{eqnarray*}
\lim_{t\rightarrow0}~_{F^*_{1,2}}\langle P'_tv,u\rangle_{F_{1,2}}=\lim_{t\rightarrow0}~_{F^*_{1,2}}\langle v,P_tu\rangle_{F_{1,2}}=~_{F^*_{1,2}}\langle v,u\rangle_{F_{1,2}},\ \forall u\in F_{1,2}, v\in F^*_{1,2}.
\end{eqnarray*}
Hence,
\begin{eqnarray}\label{Pt'}
 P'_tv\rightharpoonup v\ \ \text{in}\ F^*_{1,2}\ \ \text{as}\ t\rightarrow0,\ \ \forall v\in F^*_{1,2}.
\end{eqnarray}

Now let $v\in L^2(\mu)$. Recall a Gelfand triple $F_{1,2}\subset L^2(\mu)(\equiv (L^2(\mu))^*)\subset F^*_{1,2}$, then by \eqref{definition of P't}, \cite[page 69, (4.2)]{LR}, $P_t$ is symmetric on $L^2(\mu)$, we have
\begin{eqnarray*}
_{F^*_{1,2}}\langle P'_tv,u\rangle_{F_{1,2}}=~_{F^*_{1,2}}\langle v,P_tu\rangle_{F_{1,2}}=\langle v,P_tu\rangle_2=\langle P_tv,u\rangle_2=~_{F^*_{1,2}}\langle P_tv,u\rangle_{F_{1,2}},\ \forall u\in F_{1,2}.
\end{eqnarray*}
Hence for $v\in L^2(\mu)$, $P_tv=P'_tv$ in $F^*_{1,2}$.

By \eqref{convex}, $\varphi(v)<\infty$ implies that $v\in L^1(\mu)\cap F^*_{1,2}$. Since \eqref{uniqueness assumption} is satisfied, then $\forall v\in L^1(\mu)\cap F^*_{1,2}$, we have $P_tv\in L^2(\mu)\subset L^1(\mu)\cap F^*_{1,2}$ and
\begin{eqnarray}\label{PtPt'}
P_tv=P'_tv \ \text{in}\ F^*_{1,2}.
\end{eqnarray}
Indeed, since $L^\infty(\mu)$ is dense in $L^1(\mu)$, for any $v\in L^1(\mu)\cap F^*_{1,2}$, there exists a sequence $\{v^n\}_{n\in\Bbb{N}}\subset L^\infty(\mu)$ such that $v^n\rightarrow v$ in $L^1(\mu)$ as $n\rightarrow \infty$. Then, by \eqref{FeL1}, $P_t$ is symmetric on $L^2(\mu)$, $P_t$ is a contraction semigroup on $L^\infty(\mu)$ (\cite[page 22, Theorem 1.4.1]{Davis}), and \eqref{definition of P't}, we know that $\forall u\in F_{1,2}\cap L^\infty(\mu)$, $v\in L^1(\mu)\cap F^*_{1,2}$,
\begin{eqnarray*}
&&\!\!\!\!\!\!\!\!_{F^*_{1,2}}\langle P_tv,u\rangle_{F_{1,2}}\nonumber\\
=&&\!\!\!\!\!\!\!\!\int_EP_tv\cdot ud\mu\nonumber\\
=&&\!\!\!\!\!\!\!\!\lim_{n\rightarrow\infty}\int_EP_tv^n\cdot ud\mu\nonumber\\
=&&\!\!\!\!\!\!\!\!\lim_{n\rightarrow\infty}\int_Ev^nP_t ud\mu\nonumber\\
=&&\!\!\!\!\!\!\!\!\int_EvP_tud\mu\nonumber\\
=&&\!\!\!\!\!\!\!\!~_{F^*_{1,2}}\langle v,P_tu\rangle_{F_{1,2}}\nonumber\\
=&&\!\!\!\!\!\!\!\!~_{F^*_{1,2}}\langle P'_tv,u\rangle_{F_{1,2}},
\end{eqnarray*}
since $F_{1,2}\cap L^\infty(\mu)$ is dense in $F_{1,2}$, \eqref{PtPt'} follows. Together with \eqref{Pt'}, we get
\begin{eqnarray*}
 P_tv\rightharpoonup v\ \ \text{in}\ F^*_{1,2}\ \ \text{as}\ t\rightarrow0,\ \ \forall v\in L^1(\mu)\cap F^*_{1,2},
\end{eqnarray*}
which also means
\begin{eqnarray}\label{P1n}
 P_{\frac{1}{n}}v\rightharpoonup v\ \ \text{in}\ F^*_{1,2}\ \ \text{as}\ n\rightarrow\infty,\ \ \forall v\in L^1(\mu)\cap F^*_{1,2}.
\end{eqnarray}

Since $\psi$ is convex, by \cite[page 83]{BV}, we know that
$$\psi(x)=\sup_{a,b\in\Bbb{R}}\{ax+b\ |\ az+b\leq \psi(z),\ \forall z\in\Bbb{R}\}.$$
Since $\psi(0)=0$, it follows that $b\leq 0$. Since $P_t$ is a sub-Markovian semigroup on $L^2(\mu)$, it follows that $b\leq P_tb$, $\forall t>0, b\leq0$. Set $v_n:=P_{\frac{1}{n}}v$, $n\in\Bbb{N}$, we then have

\begin{eqnarray}\label{vn}
\psi(v_n)&&\!\!\!\!\!\!\!\!=\sup_{a\in\Bbb{R},b\leq0}\Big\{aP_{\frac{1}{n}}v+b\ |\ az+b\leq \psi(z),\ \forall z\in\Bbb{R}\Big\}\nonumber\\
&&\!\!\!\!\!\!\!\!\leq\sup_{a\in\Bbb{R},b\leq0}\Big\{P_{\frac{1}{n}}(av+b)\ |\ az+b\leq \psi(z),\ \forall z\in\Bbb{R}\Big\}\nonumber\\
&&\!\!\!\!\!\!\!\!\leq P_{\frac{1}{n}}\Big(\sup_{a\in\Bbb{R},b\leq0}\Big\{av+b\ |\ az+b\leq \psi(z),\ \forall z\in\Bbb{R}\Big\}\Big)\nonumber\\
&&\!\!\!\!\!\!\!\!\leq P_{\frac{1}{n}}(\psi(v)).
\end{eqnarray}

By the weakly lower semicontinuity of $\varphi$ (\cite[page 157, Theorem 9.1]{BC}), \eqref{P1n}, \eqref{vn} and since $P_{\frac{1}{n}}$ is a contraction semigroup on $L^1(\mu)$ (\cite[page 22, Theorem 1.4.1]{Davis}), we have
\begin{eqnarray*}
\varphi(v)&&\!\!\!\!\!\!\!\!\leq \liminf_{n\rightarrow\infty}\varphi(v_n)\nonumber\\
&&\!\!\!\!\!\!\!\!\leq\limsup_{n\rightarrow\infty}\int_E\psi(v(x))\mu(dx)\nonumber\\
&&\!\!\!\!\!\!\!\!\leq\limsup_{n\rightarrow\infty}\int_EP_{\frac{1}{n}}\psi(v(x))\mu(dx)\nonumber\\
&&\!\!\!\!\!\!\!\!\leq\limsup_{n\rightarrow\infty}\int_E\psi(v(x))\mu(dx)\nonumber\\
&&\!\!\!\!\!\!\!\!=\varphi(v),
\end{eqnarray*}
which implies $\varphi(v)=\lim_{n\rightarrow\infty}\varphi(v_n)$.
\end{proof}

\subsection{Comparisons of different notions of solutions}\label{Comparison}

\vspace{2mm}


Let $\mathcal{F}_e^*\cap L^{p}(\mu)$, $1<p<\infty$, be defined as in \eqref{Fe*Lp}. Define
\begin{eqnarray*}
Av=\{-\bar{L}u,~u\in\mathcal{F}_e\cap L^{\frac{p}{p-1}}(\mu),~u(x)\in\beta(v(x)),~\mu-a.e.,~x\in E\},
\end{eqnarray*}
\begin{eqnarray*}
D(A)=\{v\in\mathcal{F}_e^*\cap L^{p}(\mu),~\varphi(v)<\infty,~\exists u\in\mathcal{F}_e\cap L^{\frac{p}{p-1}}(\mu), u(x)\in\beta(v(x)),~\mu-a.e.,~x\in E\}.
\end{eqnarray*}

Let $v\in D(A)$ and $-\bar{L}u\in Av$. Recall $\beta=\partial\psi$ and that $\varphi$ is convex by Proposition \ref{cvx}. Hence by \eqref{rrw3} and the convexity of $\psi$, we have for all $v'\in\mathcal{F}_e^*\cap L^{p}(\mu)$,
\begin{eqnarray*}
&&\!\!\!\!\!\!\!\!\langle-\bar{L}u,v-v'\rangle_{\mathcal{F}_e^*}\nonumber\\
=&&\!\!\!\!\!\!\!\!\int_Eu(x)\big(v(x)-v'(x)\big)\mu(dx)\nonumber\\
\geq&&\!\!\!\!\!\!\!\!\int_E\psi(v(x))-\psi(v'(x))\mu(dx)\nonumber\\
=&&\!\!\!\!\!\!\!\!\varphi(v)-\varphi(v').
\end{eqnarray*}
Consequently, $Av\subset\partial\varphi(v)$. Hence, \eqref{eq:1} implies that $X$ satisfies the gradient flow equation
\begin{equation} \label{eq:2}
\left\{ \begin{aligned}
&dX_t\in -\partial\varphi(X_t)dt+B(t,X_t)dW_t,\ \text{in}\ [0,T]\times E,\\
&X_0=x_0\in \mathcal{F}_e^* \text{~on~} E.
\end{aligned} \right.
\end{equation}

\begin{definition}\label{strong}\textbf{(\cite[Definition 1.3]{GJFA})}
Let $x_0\in L^2(\Omega,\mathcal{F}_0;\mathcal{F}_e^*)$. An $\mathcal{F}_t$-adapted continuous process $X$ in $\mathcal{F}_e^*$ with $X\in L^2(\Omega;C([0,T];\mathcal{F}_e^*))$ and $\partial\varphi(X)\in L^2([0,T]\times\Omega;\mathcal{F}_e^*)$ is called a strong solution to \eqref{eq:2} if $\Bbb{P}$-a.s.
\begin{eqnarray*}
X_t=x_0-\int_0^t\partial\varphi(X_s)ds+\int_0^tB(s,X_s)dW_s,~~\forall t\in[0,T],
\end{eqnarray*}
as an equation in $\mathcal{F}_e^*$.
\end{definition}

\begin{proposition}\label{p1}
If $X$ is a strong solution to \eqref{eq:2} in the sense of Definition \ref{strong}, it is an SVI solution to \eqref{eq:1}.
\end{proposition}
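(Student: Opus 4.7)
\textbf{(Proof plan for Proposition \ref{p1})}
The strategy is to verify the two requirements of Definition \ref{svi} directly from Definition \ref{strong}. For the regularity \eqref{svii}, I would exploit that $\psi(0)=0$, hence $\varphi(0)=0$, together with the subdifferential inequality for $\varphi$: since $\partial\varphi(X_s)$ selects an element $\eta_s \in \mathcal{F}_e^*$ with
$\varphi(v') \geq \varphi(X_s) + \langle \eta_s, v'-X_s\rangle_{\mathcal{F}_e^*}$ for all $v'\in\mathcal{F}_e^*$, taking $v'=0$ yields
$\varphi(X_s) \leq \langle \eta_s, X_s\rangle_{\mathcal{F}_e^*} \leq \|\eta_s\|_{\mathcal{F}_e^*}\|X_s\|_{\mathcal{F}_e^*}$. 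Integrating and applying Cauchy--Schwarz, the hypotheses $\partial\varphi(X) \in L^2([0,T]\times\Omega;\mathcal{F}_e^*)$ and $X \in L^2(\Omega;C([0,T];\mathcal{F}_e^*))$ give $\varphi(X)\in L^1([0,T]\times\Omega)$.

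For the SVI inequality \eqref{svi3}, I would fix an arbitrary admissible pair $(G,Z)$ as in Definition \ref{svi}(ii) and apply It\^{o}'s formula in the Hilbert space $\mathcal{F}_e^*$ to $\|X_t-Z_t\|^2_{\mathcal{F}_e^*}$. Since both $X$ and $Z$ satisfy linear stochastic equations in $\mathcal{F}_e^*$ with drift and square-integrable martingale part, this produces
\begin{eqnarray*}
\|X_t - Z_t\|^2_{\mathcal{F}_e^*}
&=& \|x_0-Z_0\|^2_{\mathcal{F}_e^*}
+ 2\int_0^t \langle -\eta_s - G_s, X_s-Z_s\rangle_{\mathcal{F}_e^*}\, ds \\
&& +\, 2\int_0^t \langle X_s-Z_s, (B(s,X_s)-B(s,Z_s))dW_s\rangle_{\mathcal{F}_e^*}\\
&& +\, \int_0^t \|B(s,X_s)-B(s,Z_s)\|^2_{L_2(U,\mathcal{F}_e^*)}\, ds,
\end{eqnarray*}
with $\eta_s \in \partial\varphi(X_s)$. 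Taking expectation eliminates the martingale term.

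The crucial step is then to bound the drift contribution using the subdifferential inequality: for any $\eta_s\in\partial\varphi(X_s)$,
$\langle \eta_s, X_s-Z_s\rangle_{\mathcal{F}_e^*} \geq \varphi(X_s)-\varphi(Z_s)$,
which gives $\langle -\eta_s, X_s-Z_s\rangle_{\mathcal{F}_e^*} \leq \varphi(Z_s)-\varphi(X_s)$, and the noise term is controlled by the Lipschitz property \eqref{fe1}, yielding $\|B(s,X_s)-B(s,Z_s)\|^2_{L_2(U,\mathcal{F}_e^*)} \leq C_1\|X_s-Z_s\|^2_{\mathcal{F}_e^*}$. Substituting and rearranging gives \eqref{svi3} with $C := C_1$.

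The main obstacle will be the rigorous justification of It\^o's formula in this setting and the measurability of the selection $\eta_s \in \partial\varphi(X_s)$, which, however, is already implicit in Definition \ref{strong} since $\partial\varphi(X)$ is assumed to be an element of $L^2([0,T]\times\Omega;\mathcal{F}_e^*)$. A secondary technicality is that the Lipschitz estimate \eqref{fe1} is phrased for arguments in $K=L^1(\mu)\cap L^\infty(\mu)\cap \mathcal{F}_e^*$; one needs to invoke the unique continuous extension of $B(s,\cdot)$ from $K$ to $\mathcal{F}_e^*$ (which exists by density and the Lipschitz bound \eqref{fe1}) to make sense of $B(s,X_s)-B(s,Z_s)$ for the general $\mathcal{F}_e^*$-valued processes $X$ and $Z$ appearing here.
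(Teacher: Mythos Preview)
Your argument is correct and, for the SVI part, essentially identical to the paper's: apply It\^o's formula to $\|X_t-Z_t\|^2_{\mathcal{F}_e^*}$, take expectations, use the subdifferential inequality $\langle -\eta_s,X_s-Z_s\rangle_{\mathcal{F}_e^*}\leq\varphi(Z_s)-\varphi(X_s)$, and control the noise via \eqref{fe1}.

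For the regularity \eqref{svii}, however, you take a genuinely more elementary route than the paper. The paper applies It\^o's formula to $\|X_t\|^2_{\mathcal{F}_e^*}$, uses $\langle\partial\varphi(X_s),X_s\rangle_{\mathcal{F}_e^*}\geq\varphi(X_s)$ together with the growth bound \eqref{fe2}, and closes with Gronwall's lemma. Your approach bypasses both It\^o's formula and Gronwall for this step: from $\varphi(0)=0$ and the subdifferential inequality you get $\varphi(X_s)\leq\|\eta_s\|_{\mathcal{F}_e^*}\|X_s\|_{\mathcal{F}_e^*}$ directly, and then Cauchy--Schwarz with the standing $L^2$ assumptions on $\partial\varphi(X)$ and $X$ immediately gives $\varphi(X)\in L^1([0,T]\times\Omega)$. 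This is shorter and uses nothing beyond what Definition~\ref{strong} already provides; the paper's version, on the other hand, yields the slightly stronger quantitative bound $\Bbb{E}\int_0^T\varphi(X_s)\,ds\leq C(\Bbb{E}\|x_0\|^2_{\mathcal{F}_e^*}+1)$ in terms of the initial datum alone. Your remark about extending $B(s,\cdot)$ from $K$ to $\mathcal{F}_e^*$ by density is a valid technical point that the paper's proof leaves implicit.
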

\begin{proof}
Firstly, let us verify regularity. By It\^{o}'s formula,
\begin{eqnarray*}
\Bbb{E}\|X_t\|^2_{\mathcal{F}_e^*}=\Bbb{E}\|x_0\|^2_{\mathcal{F}_e^*}-2\Bbb{E}\int_0^t\langle \partial\varphi(X_s),X_s\rangle_{\mathcal{F}_e^*}ds+\Bbb{E}\int_0^t\|B(s,X_s)\|^2_{L_2(U,\mathcal{F}_e^*)}ds.
\end{eqnarray*}
By the definition of the subdifferential $\partial\varphi$ we have that
$$\langle\partial\varphi(X_s),X_s\rangle_{\mathcal{F}_e^*}=\langle\partial\varphi(X_s),X_s-0\rangle_{\mathcal{F}_e^*}\geq\varphi(X_s),~~ds\otimes \Bbb{P}-a.e.$$
and by \eqref{fe2}
\begin{eqnarray*}
\Bbb{E}\|X_t\|^2_{\mathcal{F}_e^*}+\Bbb{E}\int_0^t\varphi(X_s)ds\leq\Bbb{E}\|x_0\|^2_{\mathcal{F}_e^*}+C_2\Bbb{E}\int_0^t(\|X_s\|^2_{\mathcal{F}_e^*}+1)ds.
\end{eqnarray*}
Hence by Gronwall's lemma we get \eqref{svii}.

To verify the stochastic variational inequality, let $Z\in L^2(\Omega;C([0,T];\mathcal{F}_e^*))\cap L^2([0,T]\times\Omega;L^2(\mu))$ be a solution to
$$Z_t=Z_0+\int_0^tG_sds+\int_0^tB(s,Z_s)dW_s,~~\forall t\in[0,T],$$
for some $\mathcal{F}_t$-progressively measurable process $G\in L^2([0,T]\times \Omega;\mathcal{F}_e^*)$. By It\^{o}'s formula,
\begin{eqnarray*}
\Bbb{E}\|X_t-Z_t\|^2_{\mathcal{F}_e^*}=&&\!\!\!\!\!\!\!\!\Bbb{E}\|x_0-Z_0\|_{\mathcal{F}_e^*}^2+2\Bbb{E}\int_0^t\langle-\partial\varphi(X_s)-G_s,X_s-Z_s\rangle_{\mathcal{F}_e^*}ds\nonumber\\
&&\!\!\!\!\!\!\!\!+\Bbb{E}\int_0^t\|B(s,X_s)-B(s,Z_s)\|^2_{L_2(U,\mathcal{F}_e^*)}ds,~~\forall t\in [0,T].
\end{eqnarray*}
By the definition of the subdifferential $\partial\varphi$ we have that
\begin{eqnarray*}
\langle-\partial\varphi(X_s),X_s-Z_s\rangle_{\mathcal{F}_e^*}\leq \varphi(Z_s)-\varphi(X_s),~~ds\otimes \Bbb{P}-a.e..
\end{eqnarray*}
This together by \eqref{fe1} implies \eqref{svi3}.
\end{proof}

In \cite{RWX2021}, the well-posedness of strong solutions to \eqref{eq:1} was proved if $x_0\in L^2(\mu)\cap L^{2m}(\mu)\cap\mathcal{F}_e^*$ with $m\in[1,\infty)$. Strong solutions to \eqref{eq:1} are defined as follows.
\begin{definition}\label{strong solution}\textbf{(\cite[Definition 3.2]{RWX2021})}
Let $x_0\in\mathcal{F}_e^*$. An $\mathcal{F}_e^*$-valued, $\mathcal{F}_t$-adapted process $X$ is called a strong solution to \eqref{eq:1} if there exists a $\tau\in[2,\infty)$ such that the following conditions hold:
\begin{eqnarray*}
   &&X\ \text{is}\ \mathcal{F}_e^*-{valued\ continuous\ on}\ [0,T],\ \Bbb{P}-a.e.;\\
   &&X\in L^\tau(\Omega \times (0,T)\times E);
\end{eqnarray*}
there is an $\eta \in L^{\frac{\tau}{m}}(\Omega\times (0,T)\times E)$ such that
\begin{eqnarray*}
&&\eta \in \beta(X),\ dt \otimes \Bbb{P} \otimes d\mu-a.e.\ \text{on}\ \Omega \times (0,T)\times E;
\end{eqnarray*}
\begin{eqnarray*}
\int_0^\cdot \eta_sds\in C([0,T];\mathcal{F}_e),\ \Bbb{P}-a.e.;
\end{eqnarray*}
\begin{eqnarray*}
X_t=x+L\int_0^t\eta_sds+\int_0^tB(s,X_s)dW_s,\ \forall t\in [0,T],\ \Bbb{P}-a.e..
\end{eqnarray*}
\end{definition}

\begin{remark}\label{comparison of strong and SVI}
Let $x_0\in L^2(\mu)\cap L^{2m}(\mu)\cap\mathcal{F}_e^*$ ($=L^{2m}(\mu)\cap\mathcal{F}_e^*$, since $\mu$ is a finite measure). Let $X$ be the unique strong solution of \eqref{eq:1} in the sense of Definition \ref{strong solution}. From the proofs of \cite[Theorem 3.3]{RWX2021} and Theorem \ref{theorem} we know that $X$ is also an SVI solution.
\end{remark}

\begin{remark}\label{RWX strong solution}
In \cite{RWX2021}, to obtain the strong solutions to \eqref{eq:1}, it was assumed that the Dirichlet form is given by a squared field operator $\Gamma$, satisfying a weakened chain rule (see \cite[(H4)]{RWX2021}). This assumption is, however, not necessary in this paper.
\end{remark}

\end{document}